\newcommand{\pf}{\begin{proof}}
\newcommand{\epf}{\end{proof}}
\newcommand{\eq}{\begin{equation}}
\newcommand{\eeq}{\end{equation}}
\newcommand{\eqn}{\begin{equation*}}
\newcommand{\eeqn}{\end{equation*}}
\newtheorem{theorem}[equation]{Theorem}%[section]
\newtheorem{lemma}[equation]{Lemma}
\theoremstyle{remark}
\theoremstyle{definition}
\numberwithin{equation}{section}
\begin{document}

\title{The bounded spherical functions on the Cartan Motion group and generalizations for the Eigenspaces of the Laplacian on $\mathbb{R}^{n}$}
\author{Jingzhe Xu}
\address[Xu]{Department of Mathematics, Hong Kong University of Science
and technology,
Clear Water Bay, Kowloon, Hong Kong SAR, China}
\email{jxuad@ust.hk}
\abstract{The bounded spherical functions are determined for a real Cartan Motion group which is a generalization for
the case when the Cartan Motion group is complex written by Sigurdur Helgason[1]. Also, I will do a further step of the Laplacian on
$\mathbb{R}^{n}$. I consider the case when $K$ is transitive on the spheres about 0 in $\mathbb{R}^{n}$,$n>1$}
\endabstract

\keywords{Cartan Motion Group,spherical functions, eigenspaces of the Laplacian on $\mathbb{R}^{n}$}
\subjclass[2017]{22E46, 22E47}
%\date{\today}
%
\maketitle     % maketitle AFTER the abstract in AMS classes
%
%%%%%%%%%%%%% Section1 %%%%%%%%%%%%%%%%%%%%%%%%%%%%%%%%%%%%%%
\section{Introduction}
%%%%%%%%%%%%%%%%%%%%%%%%%%%%%%%%%%%%%%%%%%%%%%%%%%%%%%%%%%%%%
Consider a symmetric space $X=G/K$ of noncompact type, $G$ being a connected noncompact semisimple lie group with finite center and $K$
a maximal compact subgroup. Let $g=k+p$ be the corresponding Cartan decomposition, $p$ being the orthocomplement of $k$ relative to the
killing form    of $g$. Let $a\subset p$ be a maximal abelian subspace. Let $G_{0}$ be the Cartan Motion group. This
group is defined as the semidirect product of $K$ and $p$ with respect to the adjoint action of $K$ on $p$. The $X_{0}=G_{0}/K$ is naturally
identified with the Euclidean space $p$. The element $g_{0}=(k,Y)$ actions on $p$ by

$g_{0}(Y^{'})=Ad(k)Y^{'}+Y \ k\in K,Y,Y^{'}\in p$,

So the algebra $\mathbb{D}(X_{0})$ of $G_{0}$-invariant differential operators on $X_{0}$ is identified with the algebra of $Ad(K)$-invariant
constant coefficient differential operators on $p$. The corresponding spherical functions on $X_{0}$ are given by
      $\psi_{\lambda}(Y)=\int_{k}e^{i\lambda(Ad(k)Y)}dk  \    \lambda\in a_{c}^{*}$

and $\psi_{\lambda}=\psi_{\mu}$ if and only if $\lambda$ and $\mu$ are $W$-conjugate. See e.g.[2],IV\S 4. Again, the maximal ideal space of
$L^{\natural}(G_{0})$ is up to $W$-invariance identified with the set of $\lambda$ in $a_{c}^{*}$ for which $\psi_{\lambda}$ is bounded. Since
$\rho$ is relative to the curvature of $G/K$ it is natural to expect the bounded $\psi_{\lambda}$ to come from replacing $c(\rho)$ by the origin,
where $c(\rho)$ is for the semisimple case also proved by Sigurdur Helgason [3]. In the words, $\psi_{\lambda}$ is would be expected to be bounded
if and only if $\lambda$ is real, that is $\lambda\in a^{*}$. In [1], Sigurdur Helgason proved when $G$ is complex, the spherical function $\psi_{\lambda}$ on $G_{0}$ is bounded if and only if $\lambda$ is real, i.e. $\lambda\in a^{*}$ mainly by using two results proved by Harish-Chandra
[4] and [5]. In this paper, I use a different way to prove when $G$ is real, the spherical function $\psi_{\lambda}$ on $G_{0}$ is bounded also
if and only if $\lambda$ is real. In this way, we generalize Sigurdur Helgason's results.

In Sigurdur Helgason's another paper [6], he considered Eigenspaces of the Laplacian on $\mathbb{R}^{n}$. Let $L$ denote the usual Laplacian on $\mathbb{R}^{n}$ and for each $\lambda \in \mathbb{C}$ let $\mathcal{E}_{\lambda}(\mathbb{R}^{n})$ denote the eigenspace
$\mathcal{E}_{\lambda}(\mathbb{R}^{n})=\{f\in \mathcal{E}(\mathbb{R}^{n})\mid Lf=-\lambda^{2}f\}$ with the topology induced by that of $\mathcal{E}(\mathbb{R}^{n})$. Let $G$ denote the group of all isometries of $\mathbb{R}^{n}$, and $K$ the group of rotations $O(n)$. Sigurdur Helgason mainly proved the natural action of $G$ on $\mathcal{E}_{\lambda}(\mathbb{R}^{n})$ is irreducible if and only if $\lambda\neq 0$. I will
prove when $K\subset O(n)$ is transitive on the spheres about 0 in $\mathbb{R}^{n}$,$n>1$ instead of $O(n)$ and $G=K\rtimes \mathbb{R}^{n}$, the same results holds. In this way, we do a further step of this kind of problem. Meanwhile, I will specific all the groups $K$ which is transitive on the spheres about 0 in $\mathbb{R}^{n}$.

Finally, according to [7], we know when $K\subset O(n)$ is transitively on the spheres about 0, the specific form of the spherical functions on
$K\rtimes \mathbb{R}^{n}/K\cong \mathbb{R}^{n}$. Then I will give a estimation for it when $r\rightarrow \infty$.

%
%%%%%%%%%%% Section2 %%%%%%%%%%%%%%%%%%%%%%%%%%%%%%%%%%%%%%%%%%%%%%%%%%
\section{The main theorem}
%%%%%%%%%%%%%%%%%%%%%%%%%%%%%%%%%%%%%%%%%%%%%%%%%%%%%%%%%%%%%
The notion of induced spherical function mirrors the notion of induced representation. Let $Q\subset G$ be a closed subgroup such that $K$ is transitive on $G/Q$, i.e.$G=KQ$,i.e.$G=QK$,i.e. $Q$ is transitive on $G/K$. Let $\zeta: Q\rightarrow \mathbb{C}$ be spherical for $(Q,Q\bigcap K)$.
The induced spherical function is
$[Ind_{Q}^{G}(\zeta)](g)=\int_{K}\widetilde{\zeta}(gk)d\mu_{K}(k)$ \  where $\widetilde{\zeta}(kq)=\zeta(q)\Delta_{G/Q}(q)^{-\frac{1}{2}}$

Here $\Delta_{G/Q}:Q\rightarrow \mathbb{R}^{n}$ is the quotient of modular functions, $\Delta_{G/Q}(q)=\Delta_{G}(q)/\Delta_{Q}(q)=\Delta_{Q}(q)^{-1}$

\begin{theorem}\label{equal}
Let $\lambda\in a_{c}^{*}$. Then $\psi_{\lambda}(Y)$ is the induced spherical function $Ind_{p}^{G}(\varphi_{\lambda})$, where
$\varphi_{\lambda}(Y)=e^{i\lambda(Y)}$ for every $Y\in a\subset p$.
\end{theorem}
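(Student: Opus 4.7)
The plan is to unwrap the definition of the induced spherical function with $Q=p$ and identify it, after restricting to the slice $p\subset G_0$, with the integral defining $\psi_\lambda$. A first simplification is that all modular functions in sight are trivial: $K$ is compact and $p$ is abelian, and since $\Ad(K)$ acts on $p$ by orthogonal transformations preserving Lebesgue measure, $G_0=K\ltimes p$ is unimodular as well. Hence $\Delta_{G_0/p}\equiv 1$, so $\widetilde{\varphi_\lambda}(kq)=\varphi_\lambda(q)$ for all $k\in K$ and $q\in p$.

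Next I would read off the $KQ$ decomposition from the semidirect product law $(k_1,Y_1)(k_2,Y_2)=(k_1k_2,\,Y_1+\Ad(k_1)Y_2)$: every $(k,Y)\in G_0$ factors uniquely as $(k,0)\cdot(e,\Ad(k^{-1})Y)$, so $\widetilde{\varphi_\lambda}((k,Y))=\varphi_\lambda(\Ad(k^{-1})Y)$. Here $\varphi_\lambda$ is viewed as a character of $p$ by extending $\lambda$ from $a_c^*$ to $p_c^*$, for instance by declaring it zero on the orthocomplement of $a$ in $p$. Evaluating at $g=(e,Y)$ and using $(e,Y)(k,0)=(k,Y)$,
\begin{equation*}
\bigl[\operatorname{Ind}_p^{G_0}(\varphi_\lambda)\bigr]((e,Y))
=\int_K\widetilde{\varphi_\lambda}((k,Y))\,dk
=\int_K\varphi_\lambda(\Ad(k^{-1})Y)\,dk.
\end{equation*}
Substituting $k\mapsto k^{-1}$, which preserves the Haar measure on compact $K$, the right-hand side becomes $\int_K e^{i\lambda(\Ad(k)Y)}\,dk=\psi_\lambda(Y)$, matching the formula from the introduction.

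The only delicate point is that the theorem specifies $\varphi_\lambda$ only on $a$, whereas the induction formula requires a character of all of $p$. This is not a real obstacle: every element of $p$ is $\Ad(K)$-conjugate to one in $a$, so the $K$-average depends only on $\lambda|_a$, and different character extensions yield the same induced spherical function. Finally, both $\psi_\lambda$ and $\operatorname{Ind}_p^{G_0}(\varphi_\lambda)$ are bi-$K$-invariant on $G_0$, and $G_0/K\cong p$, so agreement on the slice $\{(e,Y):Y\in p\}$ propagates to agreement on all of $G_0$.
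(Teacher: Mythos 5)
Your argument is essentially the paper's: both apply the induction formula with $Q=p$, discard the modular factor by unimodularity of $G_{0}=K\rtimes p$ and of $p$, and reduce $\operatorname{Ind}_{p}^{G_{0}}(\varphi_{\lambda})$ to $\int_{K}e^{i\lambda(\Ad(k)Y)}\,dk=\psi_{\lambda}(Y)$ via the semidirect-product decomposition and invariance of Haar measure under $k\mapsto k^{-1}$. One caveat on your ``delicate point'': the claim that different character extensions of $\lambda$ from $a$ to $p$ yield the same induced spherical function is false. The induced function is $\int_{K}e^{i\mu(\Ad(k)Y)}\,dk$ for the chosen extension $\mu\in p^{*}$, and this depends on the $\Ad^{*}(K)$-orbit of $\mu$, not only on $\mu|_{a}$; your conjugation argument moves $Y$ into $a$, but the integrand $e^{i\mu(\Ad(k)H)}$ still evaluates $\mu$ on all of $p$. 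For instance, with $\frg=\frsl(2,\bbR)$, $K=SO(2)$ acting on $p\cong\bbR^{2}$ by rotations and $a$ a line, extensions of $\lambda$ of different norms lie on different $K$-orbits and give different Bessel-type averages. The theorem is nevertheless correct because the standard convention, implicit in the paper's formula for $\psi_{\lambda}$, is to extend $\lambda$ by zero on the orthocomplement of $a$ in $p$ (equivalently, via the Killing form), which is exactly the extension your computation uses on both sides; simply fix that extension and drop the independence claim.
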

\begin{proof}
Apply above formula to $\varphi_{\lambda}$ with $Q=P$. Since $G=K\rtimes p$ and $p$ are unimodular, it says that the induced spherical function is given by
$Ind_{p}^{G}(\varphi_{\lambda})(k,Y)=\int_{K}\varphi_{\lambda}(Ad(k_{0})Y)dk_{0}=\int_{K}e^{i\lambda(Ad(k_{0})Y)}dk_{0}=\int_{K}e^{i\lambda(Ad(k)Y)}dk
=\psi_{\lambda}(Y)=\psi_{\lambda}(k,Y)$ \ for\ $Y\in p, k\in K$. Note that $(k_{0},0)(k,Y)=(k_{0}k,Ad(k_{0})Y)$.
\end{proof}

We apply the Mackey little group method to $G$ relative to its normal subgroup $p$:If $\psi$ is an irreducible unitary representation of $G$,then it can be constructed (up to unitary equivalence) as follows: If $\varphi_{\lambda}(Y)=e^{i\lambda(Y)}$, where $\lambda \in p^{*}$, $Y\in p$, let
$K_{\varphi_{\lambda}}=\{k\in K\mid \varphi_{\lambda}(Ad(k)Y)=\varphi_{\lambda}(Y) \ \forall Y\in p\}$. $K_{\varphi_{\lambda}}$ is a closed subgroup of $K$. Let $G_{\varphi_{\lambda}}=K_{\varphi_{\lambda}}\rtimes p$. Write $\widetilde{\varphi_{\lambda}}$ for the extension of $\varphi_{\lambda}$ to
$G_{\varphi_{\lambda}}$ given by $\widetilde{\varphi_{\lambda}}((k,Y))=\varphi_{\lambda}(Y)$. If $\gamma$ is an irreducible unitary representation of $K_{\varphi_{\lambda}}$, let $\widetilde{\gamma}$ denote its extension of $G_{\varphi_{\lambda}}$ given by
$\widetilde{\gamma}((k,Y))=\gamma(k)$. Denote $\psi_{\varphi_{\lambda},\gamma}=Ind_{G_{\varphi_{\lambda}}}^{G}(\widetilde{\varphi_{\lambda}}\otimes \widetilde{\gamma})$, then there exist choices of $\varphi_{\lambda}$ and $\gamma$ such that $\psi=\psi_{\varphi_{\lambda},\gamma}$.

\begin{theorem}\label{equal}
In the notation above, $\psi_{\varphi_{\lambda},\gamma}$ has a $K$-fixed vector is given(up to scalar multiple by $u((k,Y))=e^{-i\lambda(Ad(k^{-1})Y)}$, if $\varphi_{\lambda}=e^{i\lambda(Y)}$.
\begin{proof}
The representation space $H_{\psi}$ of $\psi=\psi_{\varphi_{\lambda},\gamma}$ consists of all $L^{2}$ functions $f:G\rightarrow H_{\psi}$
such that $f(g^{'}(k^{'},x^{'}))=\gamma(k^{'})^{-1}\varphi_{\lambda}(x^{'})^{-1}f(g^{'})$ for $g^{'}\in G$, $x^{'}\in p$, $k^{'}\in K_{\varphi_{\lambda}}$, and $\psi$ acts by $(\psi(g)f)(g^{'})=f(g^{-1}g^{'})$.

Now suppose that $0\neq f\neq \in H_{\psi}$ is fixed under $\psi(K)$. If $k^{'}\in K_{\varphi_{\lambda}}$, then $\gamma(k^{'})f(1)=f(1)$. If $f(1)=0$, then $f(G_{\varphi_{\lambda}})=0$ and $K$-invariance says $f=0$, contrary to the assumption. Thus $f(1)\neq 0$ and irreducibility of $\gamma$ forces $\gamma$ to be trivial.

Conversely, if $\gamma$ is trivial, then $u((k,Y))=e^{-i\lambda(Ad(k^{-1})Y)}$ is a nonzero $K$-fixed vector in $H_{\psi}$. And it is the only one, up to scalar multiple, because any two $K$-fixed vectors must be proportional.
\end{proof}
\end{theorem}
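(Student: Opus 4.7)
My approach is to realize $H_\psi$ concretely as the space of sections $f : G \to H_\gamma$ satisfying the right-equivariance $f(g(k',x')) = \gamma(k')^{-1}\varphi_\lambda(x')^{-1} f(g)$ for $(k',x') \in G_{\varphi_\lambda}$, on which $\psi$ acts by left translation. A $K$-fixed vector is then a left $K$-invariant $f$ satisfying this right-equivariance, and essentially everything about such an $f$ is encoded in its single value $f(1)$.

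First I would argue that $\gamma$ must be trivial whenever a nonzero $K$-fixed $f$ exists. Evaluating at $g=1$ and combining left $K$-invariance at $k' \in K_{\varphi_\lambda} \subset K$ with the right-equivariance at $(k',0)$ yields $\gamma(k')f(1) = f(1)$. If $f(1) = 0$, then $K$-invariance forces $f$ to vanish on all of $K$, and since $G = K\cdot G_{\varphi_\lambda}$ (a direct consequence of $G/G_{\varphi_\lambda} \cong K/K_{\varphi_\lambda}$), the right-equivariance propagates this to $f \equiv 0$. Hence $f(1)$ is a nonzero $\gamma(K_{\varphi_\lambda})$-fixed vector, and irreducibility of $\gamma$ forces $\gamma$ itself to be trivial.

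Next, assuming $\gamma$ trivial (so $H_\gamma = \mathbb{C}$), I would verify that $u(k,Y) = e^{-i\lambda(Ad(k^{-1})Y)}$ actually lies in $H_\psi$ and is $K$-fixed. Both properties are routine calculations using the semidirect-product laws $(k_0,0)(k,Y)=(k_0 k, Ad(k_0)Y)$ and $(k,Y)(k',x')=(kk', Y+Ad(k)x')$, together with the defining identity $\lambda\circ Ad(k'^{-1}) = \lambda$ for $k' \in K_{\varphi_\lambda}$, which follows directly from $\varphi_\lambda(Ad(k')Y) = \varphi_\lambda(Y)$ for all $Y \in p$. The two $Ad(k_0)$ factors in the left-invariance computation cancel, and the right-equivariance computation splits into a $Y$-piece and an $x'$-piece, both collapsed by the $K_{\varphi_\lambda}$-identity.

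Finally, for uniqueness, the factorization $(k,Y) = (k,0)\cdot(1, Ad(k^{-1})Y)$ with $(1, Ad(k^{-1})Y) \in p \subset G_{\varphi_\lambda}$ forces any $K$-fixed $f \in H_\psi$ to satisfy $f(k,Y) = e^{-i\lambda(Ad(k^{-1})Y)}f(1)$, so $f$ is automatically a scalar multiple of $u$. The only subtlety I foresee is not algebraic but concerns whether $u$ really belongs to $H_\psi$; this is painless because $G/G_{\varphi_\lambda} \cong K/K_{\varphi_\lambda}$ is compact and $|u| \equiv 1$, so no integrability question arises. The conceptual heart of the argument, and the step I expect to be the main obstacle to get cleanly on the page, is the initial observation that $K$-invariance together with the right-equivariance over the large subgroup $G_{\varphi_\lambda}$ pins down $f$ entirely by its value at the identity; once this is recognized, both the constraint forcing $\gamma$ trivial and the explicit formula for $u$ drop out essentially simultaneously.
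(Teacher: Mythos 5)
Your proposal is correct and follows essentially the same route as the paper: realize $H_\psi$ as right-equivariant functions on $G$ with $\psi$ acting by left translation, show a nonzero $K$-fixed vector forces $f(1)\neq 0$ to be $\gamma(K_{\varphi_\lambda})$-fixed so that $\gamma$ is trivial, and then exhibit $u((k,Y))=e^{-i\lambda(\mathrm{Ad}(k^{-1})Y)}$ as the $K$-fixed vector. Your treatment is in fact slightly more complete: where the paper merely asserts that any two $K$-fixed vectors are proportional, you derive it from the factorization $(k,Y)=(k,0)(1,\mathrm{Ad}(k^{-1})Y)$ with $(1,\mathrm{Ad}(k^{-1})Y)\in p\subset G_{\varphi_\lambda}$, which pins down every $K$-fixed $f$ as $f(1)\cdot u$.
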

\begin{lemma}\label{twist}
In the notation above, $Ind_{G_{\varphi_{\lambda}}}^{G}(\widetilde{\varphi_{\lambda}})$ is unitary equivalent to the subrepresentation of $Ind_{p}^{G}(\widetilde{\varphi_{\lambda}})$ generated by the $K$-fixed unit vector $u((k,Y))=e^{-i\lambda(Ad(k^{-1})Y)}$, if $\varphi_{\lambda}=e^{i\lambda(Y)}$.
\end{lemma}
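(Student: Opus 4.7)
My plan is to realize the desired unitary equivalence as the natural inclusion of the space of $Ind_{G_{\varphi_\lambda}}^G(\widetilde{\varphi_\lambda})$ into that of $Ind_p^G(\varphi_\lambda)$, and then to identify its image with the closed cyclic subspace generated by $u$ via an irreducibility argument. Concretely, write $\mathcal{H}_{0}$ for the Hilbert space of $Ind_{G_{\varphi_\lambda}}^G(\widetilde{\varphi_\lambda})$, consisting of $L^2$ functions $f:G\to\mathbb{C}$ with $f(g(k',Y))=e^{-i\lambda(Y)}f(g)$ for every $(k',Y)\in G_{\varphi_\lambda}=K_{\varphi_\lambda}\rtimes p$, and write $\mathcal{H}_{1}$ for the space of $Ind_p^G(\varphi_\lambda)$, for which the covariance is only imposed for $(1,Y)\in p$. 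Since the first covariance specializes to the second, the literal inclusion $T:\mathcal{H}_0\hookrightarrow\mathcal{H}_1$ is well-defined and $G$-equivariant (both representations act by left translation). Normalizing Haar measures so that $K$ and $K_{\varphi_\lambda}$ have unit mass, $T$ is isometric: the $L^2$-norm on $\mathcal{H}_1$ reduces to $\int_K|f(k,0)|^2\,dk$, and for $f\in\mathcal{H}_0$, which is automatically right $K_{\varphi_\lambda}$-invariant, this coincides with $\int_{K/K_{\varphi_\lambda}}|f(k,0)|^2\,d(kK_{\varphi_\lambda})$, the norm on $\mathcal{H}_0$.

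The image $T(\mathcal{H}_0)$ is precisely the closed subspace of $\mathcal{H}_1$ consisting of functions right-invariant under $K_{\varphi_\lambda}$, and I will check that $u$ lies in it. Using the defining property $\lambda\circ Ad(k')=\lambda$ for $k'\in K_{\varphi_\lambda}$, the computation
$u((k,Y)(k',0))=u(kk',Y)=e^{-i\lambda(Ad(k'^{-1})Ad(k^{-1})Y)}=e^{-i\lambda(Ad(k^{-1})Y)}=u(k,Y)$
shows $u\in T(\mathcal{H}_0)$. Since $Ind_{G_{\varphi_\lambda}}^G(\widetilde{\varphi_\lambda})$ is irreducible, being the trivial-$\gamma$ case of the Mackey irreducibility criterion already used in the previous theorem, the $G$-invariant subspace $T(\mathcal{H}_0)$ is irreducible, so the closed cyclic subspace generated by the nonzero vector $u$ must coincide with all of $T(\mathcal{H}_0)$. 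Hence $T$ restricts to the unitary equivalence asserted in the lemma.

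The main obstacle is the isometry check for $T$, which reduces to a standard Fubini identity expressing the $G/p\cong K$-integral of a $K_{\varphi_\lambda}$-right-invariant function as a $G/G_{\varphi_\lambda}\cong K/K_{\varphi_\lambda}$-integral. If one prefers to avoid this verification, an alternative is to apply induction in stages, $Ind_p^G(\varphi_\lambda)\cong Ind_{G_{\varphi_\lambda}}^G\bigl(Ind_p^{G_{\varphi_\lambda}}(\varphi_\lambda)\bigr)$, decompose the inner induced representation as $\widetilde{\varphi_\lambda}\otimes L^2(K_{\varphi_\lambda})$ with $K_{\varphi_\lambda}$ acting by the regular representation, and extract the trivial $K_{\varphi_\lambda}$-isotypic summand via Peter--Weyl; this summand is exactly $Ind_{G_{\varphi_\lambda}}^G(\widetilde{\varphi_\lambda})$, and $u$ corresponds under this decomposition to the constant functions on $K_{\varphi_\lambda}$.
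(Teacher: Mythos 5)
Your argument is correct, and it is worth noting that the paper itself states this lemma with no proof at all, so there is nothing to compare it against except the surrounding Mackey-machine setup; your proof supplies exactly the missing verification. The route you take is the natural one: the inclusion $T:\mathcal{H}_0\hookrightarrow\mathcal{H}_1$ is well defined because the $G_{\varphi_\lambda}$-covariance condition (with trivial $\gamma$) implies the $p$-covariance condition, it intertwines the left-translation actions, and your normalization of Haar measures together with the Fubini identity $\int_K h(k)\,dk=\int_{K/K_{\varphi_\lambda}}\int_{K_{\varphi_\lambda}}h(kk')\,dk'\,d(kK_{\varphi_\lambda})$ makes it isometric, hence with closed image; the computation $u((k,Y)(k',0))=e^{-i\lambda(Ad(k'^{-1})Ad(k^{-1})Y)}=u((k,Y))$ for $k'\in K_{\varphi_\lambda}$ correctly places $u$ in that image (you implicitly use $\lambda\circ Ad(k'^{-1})=\lambda$, which is exactly the definition of $K_{\varphi_\lambda}$); and the irreducibility of $Ind_{G_{\varphi_\lambda}}^{G}(\widetilde{\varphi_\lambda})$, which is the trivial-$\gamma$ instance of the Mackey criterion the paper invokes before Theorem 2.2, forces the closed cyclic subspace generated by $u$ to be all of $T(\mathcal{H}_0)$. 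Your alternative sketch via induction in stages and Peter--Weyl would also work, though there you would still need to argue that $u$ generates the trivial-isotypic summand (again via irreducibility), so it buys little over the direct argument. One cosmetic point: the lemma as printed writes $Ind_{p}^{G}(\widetilde{\varphi_\lambda})$, but the induction from $p$ can only be from the character $\varphi_\lambda$ itself, as you correctly interpret it.
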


\begin{theorem}\label{equal}
Let $\varphi$ be a $(K\rtimes p, K)$-spheirical function. Then $\varphi$ is positive definite if and only if it is of the form $\psi_{\lambda}$ for some $\lambda\in a^{*}$. Further, if $\lambda,\lambda^{'}\in a^{*}$, then $\psi_{\lambda}=\psi_{\lambda^{'}}$ if and only if $\lambda^{'}\in Ad(k)\lambda$.
\end{theorem}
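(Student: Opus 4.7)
The plan is to combine the Mackey little-group machinery developed above with the GNS construction and the Gelfand pair property of $(K \rtimes p, K)$. Recall from the introduction that every $(K \rtimes p, K)$-spherical function is of the form $\psi_\mu$ for some $\mu \in a_c^*$, so the problem reduces to characterizing when $\psi_\lambda$ is positive definite and then handling the parameter ambiguity.

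\emph{The ``if'' direction.} Given $\lambda \in a^*$, view it as a real-linear functional on $p$ (via any extension, e.g., zero on the orthogonal complement of $a$). Then $\varphi_\lambda(Y) = e^{i\lambda(Y)}$ is a unitary character of the abelian group $p$, so $\pi_\lambda := Ind_p^G(\varphi_\lambda)$ is a unitary representation of $G = K \rtimes p$. By Lemma \ref{twist}, the vector $u((k,Y)) = e^{-i\lambda(Ad(k^{-1})Y)}$ is a $K$-fixed unit vector in the space of $\pi_\lambda$. The diagonal matrix coefficient $g \mapsto \langle \pi_\lambda(g) u, u \rangle$ is therefore positive definite, and the first theorem above identifies it with $\psi_\lambda$.

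\emph{The ``only if'' direction.} Suppose $\varphi = \psi_\lambda$ is positive definite. By GNS applied to $\psi_\lambda$, there is a cyclic unitary representation $(\pi, H_\pi)$ of $G$ with a $K$-fixed unit vector $v$ such that $\psi_\lambda(g) = \langle \pi(g) v, v \rangle$ (the $K$-fixedness of $v$ follows from the bi-$K$-invariance of $\psi_\lambda$). Since $(K \rtimes p, K)$ is a Gelfand pair---routine for Euclidean motion groups---$\pi$ is irreducible. The Mackey little-group description (recalled immediately before the second theorem above) then gives $\pi \cong \psi_{\varphi_\mu, \gamma}$ for some $\mu \in p^*$ and some irreducible unitary representation $\gamma$ of $K_{\varphi_\mu}$; unitarity of $\pi$ forces $\varphi_\mu$ to be a unitary character, so $\mu$ is real. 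The second theorem above then forces $\gamma$ to be trivial and identifies $v$ up to scalar with $u_\mu((k,Y)) = e^{-i\mu(Ad(k^{-1})Y)}$, and computing the matrix coefficient against $u_\mu$ via Lemma \ref{twist} yields $\psi_\lambda = \psi_\mu$ with $\mu \in p^*$ real. The standard fact $p = \bigcup_{k \in K} Ad(k)(a)$ implies the $K$-orbit of $\mu$ meets $a^*$, and since $\psi_\mu$ depends only on that orbit, we have $\psi_\lambda = \psi_{\lambda_0}$ for some $\lambda_0 \in a^*$; Weyl-group uniqueness (as recalled in the introduction) then forces $\lambda \in W \cdot a^* = a^*$.

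\emph{Uniqueness within $a^*$.} If $\lambda' = Ad(k_0) \lambda$, the substitution $k \mapsto k k_0$ in $\psi_\lambda(Y) = \int_K e^{i\lambda(Ad(k) Y)} dk$ gives $\psi_\lambda = \psi_{\lambda'}$ directly. Conversely, $\psi_\lambda$ is the Euclidean Fourier transform on $p$ of the $Ad(K)$-invariant probability measure supported on the orbit $Ad(K) \cdot \lambda \subset p^*$, so injectivity of the Fourier transform on finite Borel measures forces $Ad(K) \cdot \lambda = Ad(K) \cdot \lambda'$, giving $\lambda' = Ad(k) \lambda$ for some $k \in K$.

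\emph{Main obstacle.} The substantive step is the ``only if'' direction: it requires two inputs external to the preceding results, namely the Gelfand pair property of $(K \rtimes p, K)$ (to force irreducibility of the GNS representation) and the Mackey decomposition of irreducible unitary representations of the semidirect product with the right hypotheses to pin down the stabilizer $K_{\varphi_\mu}$. Once these are in hand, the previous theorem and lemma reduce the rest of the argument to matching parameters, and the real/unitary character of $\varphi_\mu$ forced by unitarity of $\pi$ is exactly what converts the positivity hypothesis into reality of the spectral parameter.
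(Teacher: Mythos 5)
Your proposal is correct, and on the central implication (positive definite $\Rightarrow$ $\varphi=\psi_{\lambda}$ with $\lambda\in a^{*}$) it follows the same route as the paper: GNS representation with a $K$-fixed cyclic unit vector, irreducibility coming from the spherical/Gelfand-pair structure, the Mackey little-group analysis with Theorem 2.2 forcing $\gamma$ trivial, and the matrix-coefficient computation through Lemma 2.3 and Theorem 2.1; your explicit use of $p=\bigcup_{k\in K}Ad(k)\,a$ to move the real parameter $\mu\in p^{*}$ into $a^{*}$ is a detail the paper glosses over. You diverge in two places. For the easier direction (each $\psi_{\lambda}$ with $\lambda\in a^{*}$ is positive definite) the paper argues by hand: for fixed $k$ the function $Y\mapsto e^{i\lambda(Ad(k)Y)}$ is positive definite and $\psi_{\lambda}$ is a limit of non-negative combinations of such functions, whereas you realize $\psi_{\lambda}$ directly as a diagonal matrix coefficient of the unitary representation $Ind_{p}^{G}(\varphi_{\lambda})$ at the $K$-fixed vector $u$; both are valid, the paper's being more elementary and yours letting the matrix-coefficient identification do double duty with the harder direction. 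For the converse of the uniqueness statement the paper restricts the two equivalent induced representations to $p$ and compares the resulting direct-integral decompositions over the $Ad(K)$-orbits (and in fact stops at the displayed equality of direct integrals without spelling out the final deduction), while you write $\psi_{\lambda}$ as the Fourier transform of the normalized orbital measure on $Ad(K)\cdot\lambda\subset p^{*}$ and invoke injectivity of the Fourier--Stieltjes transform on finite measures to equate the orbits; this is more elementary and actually completes the step the paper leaves implicit. One minor remark: your final appeal to Weyl-group uniqueness to place the original parameter in $a^{*}$ is not needed for the statement as formulated (it suffices that $\varphi=\psi_{\lambda_{0}}$ for some $\lambda_{0}\in a^{*}$), and it leans on the $W$-conjugacy criterion quoted without proof in the introduction; harmless, but dispensable.
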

\begin{proof}
Let $\lambda\in a^{*}$. $\sum_{i,j}e^{i\lambda(Ad(k)(-Y_{j}+Y_{i}))}\overline{c_{j}}c_{i}=\sum_{i,j}(e^{-i\lambda(Ad(k)Y_{j})}\overline{c_{j}})e^{i\lambda(Ad(k)Y_{i})}c_{i}
=(\sum_{i}e^{i\lambda(Ad(k)Y_{i})}c_{i})\overline{(\sum_{j}e^{i\lambda(Ad(k)Y_{j})}c_{j})}\geqslant 0$.

Since $\psi_{\lambda}$ is a limit of non-negative linear combinations of positive definite functions on $\mathbb{R}^{n}$, so it is positive definite.

Now let $\varphi$ be a positive definite $(G,K)$-spherical function. Let $\Pi_{\varphi}$ be the associated irreducible unitary representation, and $H_{\varphi}$ the representation space, such that there is a $K$-fixed unit vector $u_{\varphi}\in H_{\varphi}$ and let $\varphi(g)=<u_{\varphi},\prod_{\varphi}(g)u_{\varphi}>$ for all $g\in G$. Following the discussion of the Mackey little group method, and Theorem 2.2 , we have  $\varphi_{\lambda}(Y)$ for some $\lambda\in a^{*}, Y\in p$, s.t. $\Pi_{\varphi}$ is unitarily equivalent to $Ind_{G_{\varphi_{\lambda}}}^{G}(\widetilde{\varphi_{\lambda}})$. Making the identification one, $K$-fixed unit vector in $H_{\varphi}$ is given by
$u((k,Y))=e^{-i\lambda(Ad(k^{-1})Y)}$. We have $\lambda\in a{*}$ s.t.$\varphi_{\lambda}=e^{i\lambda(Y)}$ and from above several Theorems and Lemma, we compute:

$\varphi(Y)=<u,\Pi_{\varphi}(Y)u >$
=$\varphi(Y)=<u,Ind_{G_{\varphi_{\lambda}}}^{G}(\widetilde{\varphi_{\lambda}})(Y)u >$

=$\varphi(Y)=<u,Ind_{G_{p}}^{G}(\varphi_{\lambda})(Y)u >$

=$Ind_{G_{p}}^{G}(\varphi_{\lambda})(Y)$

=$\psi_{\lambda}(Y)$

=$\int_{K}e^{i\lambda(Ad(k)Y)}dk$

For the second, if $\lambda^{'}=Ad(k_{0})\lambda$ for some $k_{0}\in K$, we have:

$\psi_{\lambda^{'}}(Y)=\int_{K}e^{i\lambda^{'}(Ad(k)Y)}dk=\int_{K}e^{iAd(k_{0})\lambda(Ad(k)Y)}dk$
=$\int_{K}e^{i\lambda(Ad(k_{0}^{-1})Ad(k)Y)}dk=\int_{K}e^{i\lambda(Ad(k)Y)}dk=\psi_{\lambda}(Y)$.

Conversely, suppose that $\lambda^{'},\lambda\in a^{*}$ with $\psi_{\lambda^{'}}=\psi_{\lambda}$. Then
(up to unitary equivalence)$Ind_{G_{\varphi_{\lambda}}}^{G}(\widetilde{\varphi_{\lambda}})$=
$Ind_{G_{\varphi_{\lambda^{'}}}}^{G}(\widetilde{\varphi_{\lambda^{'}}})$. That gives us direct integral decompositions

$\int_{K}^{\bigoplus}\psi_{Ad(k)\lambda}dk=Ind_{G_{\varphi_{\lambda}}}^{G}(\widetilde{\varphi_{\lambda}})\mid p$
=$Ind_{G_{\varphi_{\lambda^{'}}}}^{G}(\widetilde{\varphi_{\lambda^{'}}})\mid p=\int_{K}^{\bigoplus}\psi_{Ad(k)\lambda^{'}}dk$
\end{proof}

\begin{theorem}\label{equal}
If $N$ is an $n$-step group with $n\geq 3$, then there are no Gelfand pairs $(K,N)$, where $K\in Aut(N)$.
\end{theorem}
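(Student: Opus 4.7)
The strategy is to translate the Gelfand pair hypothesis into Poisson commutativity of the algebra $\mathbb{C}[\frn^*]^K$ of $K$-invariant polynomials under the Kirillov-Lie bracket, and then obstruct that commutativity via the nonvanishing of an iterated triple bracket that is forced on us by the hypothesis $n \ge 3$.

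First I would invoke the standard equivalent formulations of the Gelfand pair condition: $(K,N)$ is a Gelfand pair if and only if the convolution algebra $L^1(N)^K$ of $K$-invariant integrable functions on $N$ is commutative, equivalently the algebra $\mathbb{D}(N)^K$ of $K$-invariant left $N$-invariant differential operators on $N$ is commutative. The infinitesimal formulation is what I would exploit, since it has a direct symbol-level interpretation in the enveloping algebra.

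Second, I would pass to the symbol level. The $K$-equivariant symmetrization $S(\frn) \to U(\frn)$ induces an isomorphism $S(\frn)^K \cong \mathbb{D}(N)^K$ of filtered vector spaces. On the associated graded the induced product on $\mathbb{C}[\frn^*]^K = S(\frn)^K$ is the usual commutative one, while the first-order obstruction to commutativity of $\mathbb{D}(N)^K$ is precisely the Kirillov-Lie-Poisson bracket, given on linear functionals by $\{f_X, f_Y\} = f_{[X,Y]}$ for $X, Y \in \frn$. Thus commutativity of $\mathbb{D}(N)^K$ forces $\mathbb{C}[\frn^*]^K$ to be Poisson-commutative.

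Third, I would exploit the hypothesis $n \ge 3$. The lower central series $\frn = \frn^{(1)} \supset \frn^{(2)} \supset \cdots \supset \frn^{(n)} \supset 0$ is $K$-stable since $K \subseteq \Aut(N)$, so each graded piece $V_k = \frn^{(k)}/\frn^{(k+1)}$ is a finite-dimensional $K$-module. Because $\frn^{(3)} \ne 0$, we can choose $X, Y, Z \in \frn$ with $[X, [Y, Z]] \ne 0$. By averaging nondegenerate Hermitian forms on the graded pieces, which exist by compactness of $K$, I would construct two $K$-invariant polynomials $p, q \in \mathbb{C}[\frn^*]^K$ whose Poisson bracket is controlled by the iterated commutator map $[V_1, [V_1, V_1]] \subseteq \frn^{(3)}$.

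The main obstacle is exactly this last step: one must rule out the possibility that $K$-averaging accidentally annihilates the triple bracket contribution. To overcome it, I would evaluate $\{p, q\}$ at a generic $\xi \in \frn^*$ concentrated on $\frn^{(3)}$, reducing the question to the non-identical vanishing of the $K$-invariant trilinear form $\xi([\cdot, [\cdot, \cdot]])$ on $V_1 \otimes V_1 \otimes V_1$. Since this form is nonzero at the triple $(X, Y, Z)$ for any $\xi$ not vanishing on $[X, [Y, Z]]$, the $K$-average still produces a nonzero invariant polynomial, yielding $\{p, q\} \ne 0$ and contradicting the Poisson commutativity from step two. This rules out the Gelfand pair hypothesis and proves the theorem.
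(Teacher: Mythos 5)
The paper itself gives no proof of this statement: it is quoted as a known result (it is the theorem of Benson--Jenkins--Ratcliff that a Gelfand pair $(K,N)$ forces $N$ to be at most $2$-step nilpotent), so there is nothing in the paper to compare your argument against; I can only assess your proposal on its own terms. Your first two steps are fine and standard: for $N$ connected nilpotent and $K$ compact, the Gelfand condition is equivalent to commutativity of $\mathbb{D}(N)^{K}\cong U(\frn)^{K}$, and via $K$-equivariant symmetrization and the symbol calculus this forces the invariant algebra $S(\frn)^{K}=\mathbb{C}[\frn^{*}]^{K}$ to be Poisson-commutative for the Lie--Poisson structure.

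The genuine gap is your third step, which is in fact the entire mathematical content of the theorem. The Poisson bracket of two invariants is $\{p,q\}(\xi)=\xi([dp_{\xi},dq_{\xi}])$, a \emph{single} commutator of gradients, so your claimed reduction to the invariant trilinear form $\xi([\cdot,[\cdot,\cdot]])$ on $V_{1}\otimes V_{1}\otimes V_{1}$ does not follow from anything you have set up. Worse, with $p,q$ obtained by averaging Hermitian (quadratic) forms on the graded pieces, the evaluation point you propose kills everything: if $\xi$ vanishes on a $K$-invariant complement of $\frn^{(3)}$, then the differential of a quadratic invariant built from the first graded piece vanishes at $\xi$, while one built from $\frn^{(3)}$ has differential lying in the center, so $\{p,q\}(\xi)=0$ identically for all such choices. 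To get a nonzero bracket at such $\xi$ you would need invariants pairing the first layer against polynomials in the deeper layers, and such invariants exist only when the relevant $K$-modules share constituents, which is not guaranteed and is nowhere argued. Finally, the closing claim that ``the $K$-average still produces a nonzero invariant'' is unjustified, since averaging a sign-indefinite quantity over $K$ can perfectly well vanish. So the implication ``$\frn$ at least $3$-step $\Rightarrow$ $\mathbb{C}[\frn^{*}]^{K}$ not Poisson-commutative'' is exactly what remains to be proved, and the known proofs (Benson--Jenkins--Ratcliff, Trans.\ Amer.\ Math.\ Soc.\ 321 (1990)) proceed by a genuinely different, group-level argument rather than the soft symbol computation sketched here; as written, your proposal assumes the hard part.
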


\begin{theorem}\label{equal}
We first consider $K$-spherical functions associated to a Gelfand pair $(K,N)$.

Suppose $\phi$ is a bounded $K$-spherical function on $N$. Then there is a $\pi\in \hat{N}$ and a unit vector $\xi\in H_{\pi}$ such that
\begin{equation}\label{l-invariant elements}
\phi(x)=\int_{K}<\pi(k.x)\xi,\xi>dk
\end{equation}
for each $x\in N$
\end{theorem}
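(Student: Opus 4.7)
The plan is to imitate the proof of Theorem 2.3 of this paper, replacing the abelian normal subgroup $p$ there with the (possibly nonabelian) normal subgroup $N$ and working inside the semidirect product $G = K \ltimes N$. The passage from $\phi$ to a pair $(\pi, \xi)$ proceeds through GNS, the Mackey little group construction, and an explicit matrix coefficient computation.

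First, since $(K, N)$ is a Gelfand pair, a bounded $K$-spherical function corresponds to a bounded character of the commutative Banach $*$-algebra $L^{1}(N)^{K}$ and is in particular positive definite. Extend $\phi$ to $G$ by $K$-biinvariance and apply the GNS construction to obtain an irreducible unitary representation $(\Pi_{\phi}, H_{\phi})$ of $G$ with a cyclic $K$-fixed unit vector $u_{\phi}$ such that $\phi(g) = \langle \Pi_{\phi}(g) u_{\phi}, u_{\phi}\rangle$; irreducibility follows from the extremality of $\phi$ among $K$-invariant normalized positive definite functions, which is the defining property of sphericality for a Gelfand pair. Next, apply the Mackey machine to $\Pi_{\phi}$: there exist $\pi \in \hat{N}$, the little group $K_{\pi} = \{k \in K \mid k \cdot \pi \simeq \pi\}$, and an irreducible representation $\gamma$ of $K_{\pi}$ with $\Pi_{\phi} \simeq Ind_{K_{\pi} \ltimes N}^{G}(\widetilde{\pi} \otimes \widetilde{\gamma})$. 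The argument of Theorem 2.2 applies verbatim: a nonzero $K$-fixed vector forces $\gamma$ to be trivial, so $\Pi_{\phi} \simeq Ind_{K_{\pi} \ltimes N}^{G}(\widetilde{\pi})$, and (as in the generalization of Lemma 2.3) the $K$-fixed vector $u_{\phi}$ is realized by averaging a unit vector $\xi \in H_{\pi}$ over $K / K_{\pi}$.

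For $x \in N$ one then computes $\phi(x) = \langle \Pi_{\phi}(x) u_{\phi}, u_{\phi}\rangle$ directly in the induced representation. Since $N$ is normal in $G$, the restriction $\Pi_{\phi}|_{N}$ is a direct integral of the $K$-translates $x \mapsto \pi(k^{-1} \cdot x)$ for $k \in K / K_{\pi}$, and unfolding the induced representation formula against $u_{\phi}$ produces $\phi(x) = \int_{K} \langle \pi(k \cdot x)\xi, \xi\rangle \, dk$, as desired.

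The main obstacle is this last unfolding step when $N$ is nonabelian and $\pi$ is infinite-dimensional: one must identify $u_{\phi}$ with an honest unit vector $\xi \in H_{\pi}$ rather than a distributional section of the induced bundle, and justify the appearance of the normalized Haar integral over all of $K$ (rather than over $K / K_{\pi}$) in the final formula. The Gelfand pair hypothesis, specifically the multiplicity-freeness of $\Pi_{\phi}|_{K}$, is precisely what makes this identification possible.
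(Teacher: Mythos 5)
Your proposal has two genuine gaps, and they sit exactly at the points where the paper's own argument does real work. First, your opening step --- ``a bounded $K$-spherical function corresponds to a bounded character of the commutative Banach $*$-algebra $L^{1}_{K}(N)$ and is in particular positive definite'' --- is not justified and is in fact essentially the content of the theorem. A bounded (multiplicative) character of a commutative Banach $*$-algebra need not be a positive (i.e.\ $*$-preserving, Hilbert-space realizable) functional unless the algebra is symmetric; for comparison, on a noncompact semisimple symmetric space there are plenty of bounded spherical functions that are not positive definite, so boundedness alone cannot give you the GNS input. The paper gets around this by invoking Leptin's theorem that $L^{1}(N)$ is a symmetric Banach $*$-algebra for nilpotent $N$ (reference [8]); it is this symmetry that lets one extend the character $\lambda_{\phi}$ of $L^{1}_{K}(N)$ to an irreducible $*$-representation $\bar{\pi}$ of $L^{1}(N)$ containing a one-dimensional subspace realizing $\lambda_{\phi}$ (via [9]), integrate $\bar{\pi}$ to some $\pi\in\hat{N}$, and only then read off the formula by a short computation with the $K$-average. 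Without citing symmetry of $L^{1}(N)$ (or some substitute), your GNS construction has no starting point, and positive definiteness of $\phi$ should be a corollary of the theorem (as the paper's Theorem 2.6 treats it), not a hypothesis.

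Second, even granting positive definiteness, your Mackey route does not close. For nonabelian $N$ the little group analysis requires extending $\pi$ to $K_{\pi}\ltimes N$, which in general is only possible projectively, so $\gamma$ is a projective representation with the inverse cocycle; the ``verbatim'' transfer of Theorem 2.2 (which evaluates an $L^{2}$-section at the identity --- itself problematic when $\pi$ is infinite dimensional) is not available as stated. More importantly, the final unfolding --- identifying the $K$-fixed vector $u_{\phi}$, a priori a section $k\mapsto f(k)\in H_{\pi}$ over $K/K_{\pi}$, with a single unit vector $\xi\in H_{\pi}$ so that $\langle\Pi_{\phi}(x)u_{\phi},u_{\phi}\rangle=\int_{K}\langle\pi(k\cdot x)\xi,\xi\rangle\,dk$ --- is precisely the nontrivial point, and your proposal names it as ``the main obstacle'' and then appeals to multiplicity-freeness without an argument. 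The paper's proof avoids this entirely: it never decomposes $\Pi_{\phi}$ over $\hat{N}$, but instead produces $\pi$ and $\xi$ directly from the algebra representation and obtains the $\int_{K}$ formula by inserting the $K$-invariance of test functions $f\in L^{1}_{K}(N)$ and exchanging the order of integration. So while your strategy could probably be completed for this class of groups, as written it assumes the key analytic fact (positive definiteness via symmetry of $L^{1}(N)$) and leaves the key computation (the reduction to a single vector $\xi$) unproved.
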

\begin{proof}
Let $\lambda_{\phi}:L_{K}^{1}(N)\rightarrow \mathbb{C}$ be given by integration against $\phi$.

Since $L^{1}(N)$ is a symmetric Banach *-algebra,[8],there is a representation $\bar{\pi}$ of  $L^{1}(N)$ and a one-dimensional subspace $H_{\phi}$ of $H_{\bar{\pi}}$ such that $(\bar{\pi}\mid_{L_{K}^{1}(N)},H_{\phi})$ is equivalent to $(\lambda_{\phi},\mathbb{C})$. As $\lambda_{\phi}$ is irreducible, the extension $\bar{\pi}$ is also irreducible(cf.[9]). Using approximate identities at each point of $N$, one can show that $\bar{\pi}$ is the integrated version of some $\pi\in \hat{N}$, with $H_{\pi}=H_{\bar{\pi}}$.

Choose $\xi\in H_{\phi}$ with $\left \|\xi  \right \|=1$. Then for each $f\in L_{K}^{1}(N)$, $\pi(f)\xi=\lambda_{\phi}(f)\xi$, so that
\begin{equation}\label{l-invariant elements}
\begin{split}
&<\phi,f >=\lambda_{\phi}(f)=<\pi(f)\xi,\xi>\\
&=\int_{N}f(x)<\pi(x)\xi,\xi>dx\\
&=\int_{K}\int_{N}f(k^{-1}.x)<\pi(x)\xi,\xi>dxdk\\
\end{split}
\end{equation}
since $f$ is $K$-invariant
\begin{equation}\label{l-invariant elements}
=\int_{K}\int_{N}f(k.x)<\pi(x)\xi,\xi>dxdk
\end{equation}
Since $\phi$ is $K$-invariant, we change the order of integration and obtain
\begin{equation}\label{l-invariant elements}
\phi(x)=\int_{K}<\pi(x)\xi,\xi>dk
\end{equation}
\end{proof}

A complex-valued continous function $\phi$ on a locally compact group $G$ is called positive definite if
$\sum_{i,j=1}^{n}\phi(x_{i}^{-1}x_{j})\alpha_{i}\overline{\alpha_{j}}\geq 0$ for all finite sets
$x_{1},\ldots ,x_{n}$ of elements in $G$ and any complex numbers $\alpha_{1},\ldots ,\alpha_{n}$.

\begin{theorem}\label{equal}
For Gelfand pair $(K,N)$, where $N$ is  at most 2-step nilpotent lie group, if $\phi$ is a bounded $K$-spherical function on $N$ is and only if  $\phi$ is positive definite.
\end{theorem}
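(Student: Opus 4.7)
My plan is to handle the two implications separately, with Theorem 2.5 doing essentially all the work in the forward direction and a classical $2\times 2$ estimate handling the converse.

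For the converse (positive definite $\Rightarrow$ bounded), suppose $\phi$ is a positive definite $K$-spherical function on $N$; in particular $\phi(e)=1$ by the spherical normalization. I apply the defining positive definiteness inequality to the two-element set $\{e,x\}$: the matrix $(\phi(x_i^{-1}x_j))_{i,j=1}^{2}$ is positive semidefinite, so its determinant is nonnegative. Combined with the standard identity $\phi(x^{-1})=\overline{\phi(x)}$ for positive definite functions, this yields $1-|\phi(x)|^2\ge 0$, so $|\phi(x)|\le 1$ for every $x\in N$ and $\phi$ is bounded.

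For the forward direction (bounded $\Rightarrow$ positive definite), I apply Theorem 2.5 to obtain $\pi\in\hat N$ and a unit vector $\xi\in H_\pi$ with
\[
\phi(x)=\int_K\langle \pi(k.x)\xi,\xi\rangle\,dk.
\]
For each fixed $k\in K$, the hypothesis $K\subset\Aut(N)$ ensures that the twisted map $\pi^k(x):=\pi(k.x)$ is itself a unitary representation of $N$: it is a homomorphism because $k$ acts by automorphisms, and unitary because $\pi$ is. Hence the integrand $x\mapsto\langle\pi^k(x)\xi,\xi\rangle$ is a diagonal matrix coefficient of a unitary representation of $N$, and so is itself a positive definite function. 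Since positive definiteness is preserved under finite convex combinations, a routine application of Fubini to the defining inequality $\sum_{i,j}\phi(x_i^{-1}x_j)\alpha_i\overline{\alpha_j}\ge 0$ shows it is preserved under integration against normalized Haar measure on the compact group $K$. Therefore $\phi$ is positive definite.

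The 2-step hypothesis is not used directly in the argument; it is automatic here, since Theorem 2.4 rules out any Gelfand pair $(K,N)$ with $N$ nilpotent of step $\ge 3$, so the assumption simply demarcates the nontrivial range of cases. I do not anticipate a serious obstacle: the substantive analytic content (construction of $\pi$ and the spherical vector $\xi$, and the integral formula) has been absorbed into Theorem 2.5, so what remains is the elementary observation that $K$-averaging of matrix coefficients preserves positive definiteness, together with the one-line positivity estimate in the converse direction.
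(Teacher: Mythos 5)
Your proposal is correct, and in the direction that carries the real content (bounded $\Rightarrow$ positive definite) it coincides with the paper's argument: the paper likewise feeds the integral formula $\phi(x)=\int_K\langle\pi(k.x)\xi,\xi\rangle\,dk$ from the preceding theorem into the defining sum, uses that $K$ acts by automorphisms to write $\pi(k.(x_i^{-1}x_j))=\pi(k.x_i)^{-1}\pi(k.x_j)$, and recognizes the double sum as $\int_K\bigl\|\sum_i\overline{\alpha_i}\,\pi(k.x_i)\xi\bigr\|^2\,dk\ge 0$; your phrasing via ``each twisted matrix coefficient is positive definite and positive definiteness survives averaging over $K$'' is the same computation packaged slightly differently. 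The only genuine divergence is in the converse: you use the elementary $2\times 2$ positive-semidefiniteness estimate (determinant of the matrix on $\{e,x\}$, together with $\phi(e)=1$ and $\phi(x^{-1})=\overline{\phi(x)}$) to get $|\phi(x)|\le 1$, whereas the paper invokes the GNS-type construction -- a positive definite spherical function is a diagonal matrix coefficient $\varphi(g)=\langle u_\varphi,\Pi_\varphi(g)u_\varphi\rangle$ of an irreducible unitary representation with a $K$-fixed unit vector -- and then applies Cauchy--Schwarz. Your route is more elementary and self-contained, needing nothing beyond the definition of positive definiteness; the paper's route reuses the representation-theoretic machinery it has already set up for the Cartan motion group (and which it needs elsewhere anyway). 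Your remark that the $2$-step hypothesis is never used, being merely the range in which Gelfand pairs can exist by the preceding nonexistence theorem, matches the role it plays in the paper.
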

\begin{proof}
If $\phi$ is a bounded $K$-spherical function on $N$, for all finite sets
$x_{1},\ldots ,x_{n}$ of elements in $G$ and any complex numbers $\alpha_{1},\ldots ,\alpha_{n}$, we have:

$\sum_{i,j=1}^{n}\phi(x_{i}^{-1}x_{j})=\int_{K}<\pi(k(x_{i}^{-1}x_{j})\xi,\xi>dk$
=$\int_{K}<\pi(k(x_{i})^{-1}k(x_{j})\xi,\xi>dk=\int_{K}<\pi(k(x_{j})\xi,k(x_{i})\xi>dk$.

Therefore,
$\sum_{i,j=1}^{n}\phi(x_{i}^{-1}x_{j})\alpha_{i}\overline{\alpha_{j}}$
=$\sum_{i,j=1}^{n}\int_{K}<\overline{\alpha_{j}}\pi(k(x_{j}))\xi,\overline{\alpha_{i}}\pi(k(x_{i}))\xi>dk$
=$\int_{K}<\sum_{j}\overline{\alpha_{j}}\pi(k(x_{j}))\xi,\sum_{i}\overline{\alpha_{i}}\pi(k(x_{i}))\xi>dk\geq 0$.

Conversely, if $\phi$ is a bounded $K$-spherical function on $N$ then $\phi$ is positive definite. Let $\varphi$ be a positive definite $(G,K)$-spherical function. Let $\Pi_{\varphi}$ be the associated irreducible unitary representation, and $H_{\varphi}$ the representation space, such that there is a $K$-fixed unit vector $u_{\varphi}\in H_{\varphi}$ and let $\varphi(g)=<u_{\varphi},\prod_{\varphi}(g)u_{\varphi}>$ for all $g\in G$. Then we have
$\left |\varphi(g)  \right |=\left |<u_{\varphi},\prod_{\varphi}(g)u_{\varphi}>  \right |$
$\leq \left |<u_{\varphi},u_{\varphi}>  \right |^{\frac{1}{2}}\times \left |<\prod_{\varphi}(g)u_{\varphi},\prod_{\varphi}(g)u_{\varphi}>  \right |^{\frac{1}{2}}$
$\leq \left |<u_{\varphi},u_{\varphi}>  \right |=1$

Therefore, $\varphi$ is bounded.
\end{proof}

\begin{theorem}\label{equal}
In the notation just above, assume the group $G$ real . The spherical function
$\psi_{\lambda}$ on $G_{0}$ is bounded if and only if $\lambda$ is real,i.e.$\lambda\in a^{*}$.
\end{theorem}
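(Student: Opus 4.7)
The plan is to combine the two equivalences already established in this section: the Mackey-based characterization that a positive-definite $(K \rtimes p, K)$-spherical function is necessarily $\psi_{\lambda'}$ for some \emph{real} $\lambda' \in a^*$, together with the equivalence between boundedness and positive-definiteness for $K$-spherical functions on an at-most-$2$-step nilpotent Gelfand pair.

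The easy direction I would dispatch immediately: for $\lambda \in a^*$ the integrand $e^{i\lambda(Ad(k)Y)}$ has modulus one, so $|\psi_\lambda(Y)| \leq 1$, and boundedness on $G_0$ follows from the identification $\psi_\lambda(k,Y) = \psi_\lambda(Y)$ noted in the proof of the induced-spherical-function theorem.

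For the converse I would start from the assumption that $\psi_\lambda$ is bounded on $G_0$, and view it as a $K$-spherical function on the abelian (hence at most $2$-step nilpotent) Lie group $N = p$; the pair $(K, p)$ is the standard Gelfand pair underlying the Cartan motion group. The bounded-iff-positive-definite theorem then promotes boundedness to positive-definiteness, and the Mackey classification produces some $\lambda' \in a^*$ with $\psi_\lambda = \psi_{\lambda'}$. Invoking the $W$-conjugacy characterization of coincidence of spherical functions recalled in the introduction, $\lambda$ must lie in the $W$-orbit of $\lambda'$; since $W$ is a group of linear automorphisms of the real form $a^*$, one concludes $\lambda \in a^*$.

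The step I expect to require the most care is the invocation of the uniqueness statement $\psi_\lambda = \psi_\mu \iff \lambda \sim_W \mu$ in the asymmetric situation where one parameter is a priori only in $a_c^*$ while the other sits in $a^*$. A clean way to handle this would be to evaluate both sides on the algebra $\mathbb{D}(X_0)$ of $G_0$-invariant differential operators: each $D$ acts on $\psi_\lambda$ by a $W$-invariant polynomial $P_D(\lambda)$, and by Chevalley's restriction theorem the tuple $(P_D(\lambda))_D$ separates $W$-orbits in $a_c^*$, which forces $\lambda \in W \cdot \lambda' \subset a^*$. The only other point worth attending to is checking that the Gelfand-pair hypothesis of the bounded-iff-positive-definite theorem applies to $(K, p)$, but this is immediate from the commutativity of the convolution algebra of $K$-invariants on $p$ that is built into the very definition of the Cartan motion group's spherical theory.
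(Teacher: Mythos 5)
Your proposal follows essentially the same route as the paper: the forward direction by the unimodular integrand, and the converse by combining the bounded-iff-positive-definite theorem for the Gelfand pair $(K,p)$ (using that $p$ is abelian) with the classification of positive definite spherical functions as $\psi_{\lambda'}$, $\lambda'\in a^{*}$. Your additional care in passing from $\psi_{\lambda}=\psi_{\lambda'}$ to $\lambda\in a^{*}$ via $W$-orbit separation on $a_{c}^{*}$ fills in a step the paper leaves implicit (it is covered by the $W$-conjugacy statement quoted in the introduction), but the argument is the same in substance.
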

\begin{proof}
According to Theorem 2.4, we obtain The spherical function $\psi_{\lambda}$ on $G_{0}$ is positive definite if and only if $\lambda$ is real,i.e.$\lambda\in a^{*}$. According to Theorem 2.11, since $p$ is abelian, we know that $\psi_{\lambda}$ is a bounded $K$-spherical function on $p$ is and only if  $\psi_{\lambda}$ is positive definite. Therefore, $\psi_{\lambda}$ on $G_{0}$ is bounded if and only if $\lambda$ is real,i.e.$\lambda\in a^{*}$.
\end{proof}

%
%%%%%%%%%%% Section3 %%%%%%%%%%%%%%%%%%%%%%%%%%%%%%%%%%%%%%%%%%%%%%%%%%
\section{Generalizations for the Eigenspaces of the Laplacian on $\mathbb{R}^{n}$}
%%%%%%%%%%%%%%%%%%%%%%%%%%%%%%%%%%%%%%%%%%%%%%%%%
%%%%%%%%%%%%%%%%
%%%%%%%%
\begin{lemma}\label{twist}
[7] Let $K$ be any closed subgroup of $O(n)$, if $K$ is transitive on the spheres about 0 in $\mathbb{R}^{n}$, then $\mathcal{D}(G/K)=\mathbb{C}[\bigtriangleup]$, algebra of polynomials in the Laplace-Beltrami operator $\bigtriangleup=-\sum \partial^{2}/\partial x_{i}^{2}.$
\end{lemma}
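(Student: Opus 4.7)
The plan is to reduce the problem to a statement about $K$-invariant polynomials via the symbol map, and then to use transitivity on spheres to identify those polynomials explicitly.

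First I would identify $\mathcal{D}(G/K)$ with the algebra of $K$-invariant constant coefficient differential operators on $\mathbb{R}^{n}$. Since $G = K\ltimes\mathbb{R}^{n}$ contains the full translation group $\mathbb{R}^{n}$, any $G$-invariant differential operator on $G/K\cong\mathbb{R}^{n}$ must in particular be translation-invariant, hence have constant coefficients. Conversely, a constant-coefficient operator is $G$-invariant exactly when its coefficients are fixed by the linear action of $K$ on $\mathbb{R}^{n}$. So the identification
\begin{equation*}
\mathcal{D}(G/K)\;\cong\;\bigl(\mathbb{C}[\partial_{1},\ldots,\partial_{n}]\bigr)^{K}
\end{equation*}
follows formally from the semidirect product structure.

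Next, I would pass from differential operators to polynomials via the symbol map $\sigma:\mathbb{C}[\partial_{1},\ldots,\partial_{n}]\to \mathbb{C}[x_{1},\ldots,x_{n}]$, sending $\partial_{j}\mapsto x_{j}$. This is a $K$-equivariant algebra isomorphism (because $K$ acts on $\mathbb{R}^{n}$ by orthogonal transformations, so the symbol map intertwines the actions on $\partial_{j}$ and on $x_{j}$). The problem is therefore reduced to computing $\mathbb{C}[x_{1},\ldots,x_{n}]^{K}$.

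The key step is to show that the hypothesis that $K$ is transitive on each sphere $S_{r}=\{x:|x|=r\}$ forces $\mathbb{C}[x_{1},\ldots,x_{n}]^{K}=\mathbb{C}[|x|^{2}]$. Any $K$-invariant polynomial $p$ is constant on each $K$-orbit; since the orbits are precisely the spheres $S_{r}$ (together with the origin), $p$ depends only on $r=|x|$. Writing $p(x)=F(|x|)$ for some function $F$ on $[0,\infty)$ and using that $p$ is polynomial (so $p$ extends to an even entire function of $|x|$ along any line through the origin), one concludes that $F$ is a polynomial in $|x|^{2}$. Hence $\mathbb{C}[x_{1},\ldots,x_{n}]^{K}=\mathbb{C}[|x|^{2}]=\mathbb{C}[x_{1}^{2}+\cdots+x_{n}^{2}]$. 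Applying $\sigma^{-1}$ yields $\mathcal{D}(G/K)=\mathbb{C}[\partial_{1}^{2}+\cdots+\partial_{n}^{2}]=\mathbb{C}[\Delta]$, up to the sign convention for $\Delta$.

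I expect the only real obstacle to be the last elementary polynomial argument (ensuring that a polynomial which depends only on $|x|$ really is a polynomial in $|x|^{2}$, rather than an odd power or a non-polynomial even function); everything else is a standard reduction to the invariant theory of the $K$-action on $\mathbb{R}^{n}$.
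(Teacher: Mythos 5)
Your proof is correct, but it is organized a bit differently from the paper's. The paper argues by induction on the order of $D\in\mathcal{D}(G/K)$: the top-order symbol of an invariant operator is a homogeneous polynomial constant on spheres about $0$, hence of the form $cr^{m}$ with $m$ even; subtracting $c(-\Delta)^{m/2}$ lowers the order, and the induction closes (its last line actually contains a typo --- it should conclude $D-c(-\Delta)^{m/2}\in\mathbb{C}[\Delta]$, hence $D\in\mathbb{C}[\Delta]$). You instead use the semidirect-product structure to identify $\mathcal{D}(G/K)$ outright with the $K$-invariant constant-coefficient operators, transport this through the full-symbol algebra isomorphism onto $\mathbb{C}[x_{1},\ldots,x_{n}]^{K}$, and classify the invariants in one step: an invariant polynomial is constant on spheres, its restriction to a line through $0$ is an even polynomial in $t$, so it lies in $\mathbb{C}[\,|x|^{2}\,]$. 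The key geometric input is identical in both arguments (an invariant polynomial constant on spheres is a polynomial in $r^{2}$), but your route replaces the paper's order induction by a one-shot invariant-theory computation and makes explicit the translation-invariance/constant-coefficients reduction that the paper leaves implicit; the paper's inductive scheme, in exchange, uses only the principal symbol, which is the pattern that survives in settings where no full-symbol algebra isomorphism is available.
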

\begin{proof}
It is clear that $\mathbb{C}[\bigtriangleup]\subset \mathcal{D}(G/K)$. Now let $D\in \mathcal{D}(G/K)$ be of order m. Then the mth order symbol of $D$ is a pollynomial of pure degree m constant on spheres about 0 in $\mathbb{R}^{n}$, in other words a multiple $cr^{m}$ with $m$ even and $r^{2}=\sum x_{i}^{2}$. Now $D-(c(-\bigtriangleup)^{m/2})\in \mathcal{D}(G/K)$ and $D-(c(-\bigtriangleup)^{m/2})$ has order $<m$. By induction on the order, $D-(c(-\bigtriangleup)^{m/2})\in \mathcal{D}(G/K)$, so we have $D\in \mathcal{D}(G/K)$.
\end{proof}
Let $L$ denote the usual Laplacian on $\mathbb{R}^{n}$ and for each $\lambda\in \mathbb{C}$ let $\mathcal{E}_{\lambda}(\mathbb{R}^{n})$ denote the eigenspace

\begin{equation}\label{l-invariant elements}
\mathcal{E}_{\lambda}(\mathbb{R}^{n})=\{f\in \mathcal{E}(\mathbb{R}^{n})\mid Lf=-\lambda^{2}f\}
\end{equation}

with the topogy induced by that of $\mathcal{E}(\mathbb{R}^{n})$. Let $G=K\rtimes \mathbb{R}^{n}$ and $K$ is the closed subgroup of $O(n)$ as well as acting transitive on the spheres about 0 in $\mathbb{R}^{n}$.

\begin{theorem}\label{equal}
The natural action of $G$ on $\mathcal{E}_{\lambda}(\mathbb{R}^{n})$ is irreducible if and only if $\lambda\neq 0$.
\end{theorem}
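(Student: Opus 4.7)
The plan is to handle the two directions separately, adapting Helgason's proof in [6] for $K = O(n)$ so that only transitivity of $K$ on the spheres is used. For $\lambda = 0$, $\mathcal{E}_0(\mathbb{R}^n)$ is the space of harmonic functions, and the one-dimensional subspace of constants is closed, $G$-invariant, and proper (linear coordinate functions are harmonic but not constant), so the action is reducible.

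For $\lambda \neq 0$, let $V \subset \mathcal{E}_\lambda(\mathbb{R}^n)$ be a nonzero closed $G$-invariant subspace; the goal is $V = \mathcal{E}_\lambda(\mathbb{R}^n)$. First I would identify the $K$-invariant elements: by Lemma 3.1 one has $\mathcal{D}(G/K) = \mathbb{C}[\bigtriangleup]$, and transitivity of $K$ on each sphere makes any $K$-invariant smooth function radial. A smooth radial solution of $Lf = -\lambda^2 f$ is determined by its value at the origin (the radial equation is a Bessel-type ODE with a one-dimensional space of smooth solutions), so the $K$-fixed subspace of $\mathcal{E}_\lambda(\mathbb{R}^n)$ is one-dimensional, spanned by the spherical function $\phi_\lambda(x) = \int_K e^{i\lambda\langle k\omega_0, x\rangle}\,dk$ for any fixed unit vector $\omega_0$. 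To place $\phi_\lambda$ inside $V$, pick $0 \neq f \in V$ and $y_0 \in \mathbb{R}^n$ with $f(y_0) \neq 0$; the $K$-average $g(x) = \int_K f(y_0 + kx)\,dk$ is a $K$-average of $G$-translates of $f$, so it lies in $V$, it lies in $\mathcal{E}_\lambda(\mathbb{R}^n)$ since $L$ commutes with the relevant isometries, it is $K$-invariant, and $g(0) = f(y_0) \neq 0$. By the previous step $g$ is a nonzero scalar multiple of $\phi_\lambda$, so $\phi_\lambda \in V$.

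The remaining task, and the main obstacle, is to show that the $G$-translates of $\phi_\lambda$ span a dense subspace of $\mathcal{E}_\lambda(\mathbb{R}^n)$. I would argue by Hahn--Banach: if $V$ were proper, pick $0 \neq T \in \mathcal{E}_\lambda(\mathbb{R}^n)'$ with $T|_V = 0$, and lift $T$ to a compactly supported distribution on $\mathbb{R}^n$. Its Fourier transform $\widehat{T}$ is entire on $\mathbb{C}^n$ by Paley--Wiener; two lifts of the same $T$ differ by $(L+\lambda^2)S$ for some $S \in \mathcal{E}'(\mathbb{R}^n)$, i.e.\ their Fourier transforms differ by a multiple of $\lambda^2 - \xi\cdot\xi$, so $T$ is determined by the restriction of $\widehat{T}$ to the complex quadric $Q = \{\xi \in \mathbb{C}^n : \xi\cdot\xi = \lambda^2\}$. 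Using transitivity of $K$ on $S^{n-1}$, the hypothesis $\langle T, T_y\phi_\lambda\rangle = 0$ for every $y$ rewrites as
\begin{equation*}
0 = \int_{S^{n-1}} e^{-i\lambda\langle\omega, y\rangle}\,\widehat{T}(-\lambda\omega)\,d\sigma(\omega) \qquad \text{for all } y \in \mathbb{R}^n,
\end{equation*}
where $\sigma$ is the normalized rotation-invariant measure on $S^{n-1}$. Injectivity of the Fourier transform on measures supported on the real sphere (valid for $\lambda \neq 0$, via a Funk--Hecke expansion into spherical harmonics) forces $\widehat{T}$ to vanish on $\lambda S^{n-1}$. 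Finally, entirety of $\widehat{T}$, together with the fact that $\lambda S^{n-1}$ is a maximally totally real submanifold of the connected complex quadric $Q$ for $n \geq 2$, upgrades this to $\widehat{T} \equiv 0$ on $Q$, so $T = 0$ in $\mathcal{E}_\lambda(\mathbb{R}^n)'$, contradicting the choice of $T$.

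The hard parts are the division/Paley--Wiener lemma identifying $\mathcal{E}_\lambda(\mathbb{R}^n)'$ with holomorphic data on $Q$, and the analytic-continuation passage from the real sphere to the complex quadric. These steps replace the explicit spherical-harmonics calculations in Helgason's $K = O(n)$ argument, and have the advantage of depending on $K$ only through its transitivity on the real sphere, so the proof applies uniformly to every transitive subgroup in the classification. Once Step 3 is in hand, the three steps combine to give $V = \mathcal{E}_\lambda(\mathbb{R}^n)$, completing the reverse direction.
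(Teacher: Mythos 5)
Your proposal is essentially correct in strategy but takes a genuinely different route from the paper. The paper (following Helgason [6]) introduces the Hilbert space $\mathcal{H}_{\lambda}$ of functions $f(x)=\int_{S^{n-1}}e^{i\lambda(x,w)}F(w)\,dw$ with $F\in L^{2}(S^{n-1})$, proves injectivity of $F\mapsto f$ (Lemma 3.5), shows the $K$-finite eigenfunctions are exactly those with $K$-finite $F$ (Lemma 3.8) and that $\mathcal{H}_{\lambda}$ is dense in $\mathcal{E}_{\lambda}(\mathbb{R}^{n})$ (Lemma 3.10), proves irreducibility first on $\mathcal{H}_{\lambda}$ by an annihilator argument, and then transfers it to $\mathcal{E}_{\lambda}$ via the expansion $f=\sum_{\delta}\alpha_{\delta}*f$. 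Your step placing $\phi_{\lambda}$ in $V$ by $K$-averaging a translate, using uniqueness of the $K$-invariant probability measure on $S^{n-1}$ and one-dimensionality of the radial eigenspace, is the same idea as the paper's passage from $h$ to $h^{\natural}=\varphi_{\lambda}$; but your completion---extending the functional to $\mathcal{E}'(\mathbb{R}^{n})$, Paley--Wiener, division by the irreducible quadratic $\lambda^{2}-\xi\cdot\xi$, and propagation of the vanishing of $\widehat{T}$ from $\lambda S^{n-1}$ to the complex quadric $Q$ by the totally-real identity principle---replaces the entire $\mathcal{H}_{\lambda}$/$K$-finite machinery. What your route buys is uniformity: $K$ enters only through transitivity on spheres, exactly as you say, and you also supply the easy $\lambda=0$ direction, which the paper leaves implicit.

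Two caveats before I would call it complete. First, the two steps you flag as ``hard'' carry the real content and are only asserted: you need that the annihilator of $\mathcal{E}_{\lambda}$ in $\mathcal{E}'(\mathbb{R}^{n})$ is $(L+\lambda^{2})\mathcal{E}'(\mathbb{R}^{n})$ (surjectivity of $L+\lambda^{2}$ on $\mathcal{E}$ plus closed range), together with a division theorem of Malgrange--H\"ormander type guaranteeing that an entire function of Paley--Wiener class vanishing on $Q$ is $(\lambda^{2}-\xi\cdot\xi)$ times another Paley--Wiener function; both are standard and true, so this is a deferral rather than an error, but references or proofs must be supplied. Second, $\lambda$ is an arbitrary nonzero complex number, so $\lambda S^{n-1}$ is not a real sphere in general: the injectivity you invoke must be stated for complex $\lambda$ (it is precisely the paper's Lemma 3.5, whose proof works for all $\lambda\neq 0$), and the totally-real/identity-principle step should be run after the rescaling $\xi\mapsto e^{-i\arg\lambda}\xi$, which carries $Q$ to the quadric with real parameter and $\lambda S^{n-1}$ to a genuine real sphere.
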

\begin{proof}
It is clear that each function
\begin{equation}\label{l-invariant elements}
f(x)=\int_{S^{n-1}}e^{i\lambda(x,w)}F(w)dw,    \    F\in L^{2}(S^{n-1}),
\end{equation}
lies in $\mathcal{E}_{\lambda}(\mathbb{R}^{n})$; here $(,)$ denotes the usual inner product on $\mathbb{R}^{n}$ and $dw$ the normalized volume element.
\end{proof}

\begin{lemma}\label{twist}
Let $\lambda\neq 0$. Then the mapping $F\rightarrow f$ defined by (3.4) is one-to-one.
\end{lemma}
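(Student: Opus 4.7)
The plan is to decompose $F$ into its components of pure spherical harmonic degree and use the Funk--Hecke formula to isolate each component from $f$. Write $F=\sum_{m\geq 0}F_m$ in $L^2(S^{n-1})$, where $F_m$ lies in the space of spherical harmonics of degree $m$. Writing $x=r\eta$ with $r=|x|$ and $\eta\in S^{n-1}$, I would interchange summation and integration to obtain
\begin{equation*}
f(r\eta)=\sum_{m\geq 0}\int_{S^{n-1}}e^{i\lambda r(\eta,w)}F_m(w)\,dw,
\end{equation*}
valid for each $r\geq 0$ by Cauchy--Schwarz and $L^2$-convergence of the spherical harmonic series.

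Next I would invoke the Funk--Hecke formula applied to the kernel $\Phi(t)=e^{i\lambda rt}$: for every spherical harmonic $F_m$ of degree $m$ one has
\begin{equation*}
\int_{S^{n-1}}e^{i\lambda r(\eta,w)}F_m(w)\,dw=\phi_m(\lambda r)\,F_m(\eta),
\end{equation*}
where $\phi_m(s)$ equals a nonzero universal constant times $s^{-(n-2)/2}J_{m+(n-2)/2}(s)$ (via Poisson's integral representation of the Bessel function). This reduces the identity $f\equiv 0$ to
\begin{equation*}
\sum_{m\geq 0}\phi_m(\lambda r)\,F_m(\eta)=0\qquad\text{for all }r\geq 0,\ \eta\in S^{n-1}.
\end{equation*}

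Now I would fix $r>0$ and exploit orthogonality: pairing the displayed identity in $L^2(S^{n-1})$ against $\overline{F_m}$ and using that spherical harmonics of different degrees are orthogonal yields
\begin{equation*}
\phi_m(\lambda r)\,\|F_m\|_{L^2(S^{n-1})}^2=0
\end{equation*}
for every $m$ and every $r>0$. Because $\lambda\neq 0$, the entire function $r\mapsto \phi_m(\lambda r)$ is not identically zero (the Bessel function $J_{m+(n-2)/2}$ has only isolated zeros on $(0,\infty)$, and more generally its analytic continuation in $\lambda r$ is nontrivial), so some choice of $r$ forces $F_m=0$. As this holds for each $m\geq 0$, we conclude $F=0$.

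The routine point is the justification of the termwise integration and of the pairing with $\overline{F_m}$, both of which follow from $F\in L^2(S^{n-1})$. The conceptual core of the argument is the Funk--Hecke formula, which transforms the injectivity question into the elementary fact that Bessel functions of the required orders do not vanish identically on the ray through $\lambda$; the hypothesis $\lambda\neq 0$ is used precisely to rule out the degenerate case where $\phi_m(\lambda r)$ would be forced to be constant.
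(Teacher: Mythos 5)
Your argument is correct, but it is a genuinely different route from the one the paper takes. The paper's proof is Helgason's original one: it pairs $f$ against the Hermite-type functions $D_x\bigl(e^{-\frac{1}{2}|x|^2}\bigr)$, so that $f\equiv 0$ forces $\int_{S^{n-1}}p(\lambda w_1,\dots,\lambda w_n)F(w)\,dw=0$ for every polynomial $p$ (the Gaussian factor is constant on the sphere); since $\lambda\neq 0$ these moments exhaust all polynomials on $S^{n-1}$, and density of polynomials in $C(S^{n-1})$, hence in $L^2(S^{n-1})$, gives $F\equiv 0$. That route is elementary and uniform in complex $\lambda$, using only Fubini and Weierstrass approximation, with no special functions. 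Your route decomposes $F$ into spherical harmonics and applies Funk--Hecke to the kernel $e^{i\lambda r t}$, reducing injectivity to the nonvanishing of $\phi_m(\lambda r)=c\,(\lambda r)^{-(n-2)/2}J_{m+(n-2)/2}(\lambda r)$ along the ray $\{\lambda r: r>0\}$; this buys explicit radial coefficients (the Bessel profile of each harmonic component, which is essentially the structure exploited later for $K$-finite vectors), at the cost of two technical points you do handle but should keep explicit: since $\lambda\in\mathbb{C}$ may be nonreal, you need Funk--Hecke for the continuous complex-valued kernel (fine), and you need that $s\mapsto s^{-(n-2)/2}J_{m+(n-2)/2}(s)$ is a nonzero \emph{entire} function, so its zeros are isolated and it cannot vanish on the ray through $\lambda$ -- quoting only the real zeros of $J_\nu$ on $(0,\infty)$ would not suffice when $\lambda\notin\mathbb{R}$. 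The termwise manipulations you flag are indeed routine: for fixed $x$ the functional $F\mapsto\int e^{i\lambda(x,w)}F(w)\,dw$ is bounded on $L^2(S^{n-1})$, and the pairing with $\overline{F_m}$ can even be done directly by Fubini and Funk--Hecke in the $\eta$-variable, avoiding the series altogether.
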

\begin{proof}
Let $p(\zeta)=p(\zeta_{1},\cdots \zeta_{n})$ be a polynomial and $D$ the corresponding constant coefficient differential operator on $\mathbb{R}^{n}$ such that
\begin{equation}\label{l-invariant elements}
\int_{\mathbb{R}^{n}}e^{i(x,\zeta)}D_{x}(e^{-(\frac{1}{2})\left |x  \right |^{2}})=p(\zeta)e^{-(\frac{1}{2})(\zeta_{1}^{2}+\cdots +\zeta_{n}^{2})}
\end{equation}
for$\zeta\in \mathbb{C}^{n}$. If $f\equiv=0$ in (1) we deduce from (3.6) that
\begin{equation}\label{l-invariant elements}
\int_{S^{n-1}}p(\lambda w_{1},\cdots ,\lambda w_{n})F(w)dw=0
\end{equation}
Since $\lambda \neq 0$, this implies $F\equiv 0$.
\end{proof}

\begin{lemma}\label{twist}
Let $\lambda\neq 0$. The $K$-finite solutions $f$ of the equation $Lf=-\lambda^{2}f$ are precisely
\begin{equation}\label{l-invariant elements}
f(x)=\int_{S^{n-1}}e^{i\lambda(x,w)}F(w)dw
\end{equation}
where $F$ is a $K$-finite function on $S^{n-1}$.
\end{lemma}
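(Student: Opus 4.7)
The plan is to prove the two inclusions separately, with the easy direction following from $K$-equivariance of the integral transform and the hard direction resting on separation of variables in polar coordinates together with a Funk--Hecke calculation.

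For the inclusion $(\Leftarrow)$, write $\mathcal{P}_\lambda F(x) = \int_{S^{n-1}} e^{i\lambda(x,w)} F(w)\, dw$. A change of variable using $K \subset O(n)$ shows that $\mathcal{P}_\lambda$ intertwines the natural $K$-action on $C^\infty(S^{n-1})$ with the natural $K$-action on $\mathcal{E}(\mathbb{R}^n)$, so that if $F$ lies in a finite-dimensional $K$-submodule of $C^\infty(S^{n-1})$, then $\mathcal{P}_\lambda F$ lies in a finite-dimensional $K$-submodule of $\mathcal{E}_\lambda(\mathbb{R}^n)$ and is $K$-finite.

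For the inclusion $(\Rightarrow)$, let $f \in \mathcal{E}_\lambda(\mathbb{R}^n)$ be $K$-finite and decompose $f = \sum_\delta f_\delta$ into its finitely many nonzero $K$-isotypic pieces. Since $K$ is transitive on $S^{n-1} = K/M$, the $\delta$-isotypic $H_\delta \subset C^\infty(S^{n-1})$ is finite-dimensional (of dimension $\dim V_\delta \cdot \dim V_\delta^M$ by Frobenius reciprocity). Moreover, since the spherical Laplacian $\Delta_S$ commutes with $K$ and acts by a scalar on each harmonic space $\mathcal{H}_\ell$, each irreducible $K$-subrepresentation of $H_\delta$ sits in some single $\mathcal{H}_\ell$. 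I would fix a basis $\{Y^{j}_\delta\}$ of $H_\delta$ consisting of genuine spherical harmonics of degrees $\ell_j$, and in polar coordinates expand $f_\delta(r\omega) = \sum_j Y^{j}_\delta(\omega)\, h^{j}_\delta(r)$ for smooth radial coefficients $h^{j}_\delta$.

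Using $L = \partial_r^2 + \frac{n-1}{r}\partial_r + r^{-2}\Delta_S$ together with $\Delta_S Y^{j}_\delta = -\ell_j(\ell_j + n - 2) Y^{j}_\delta$, the equation $Lf = -\lambda^2 f$ separates into a Bessel-type ODE for each $h^{j}_\delta$, and smoothness at the origin cuts the solution space down to one dimension. On the other hand, a direct Funk--Hecke computation gives $\mathcal{P}_\lambda(Y^{j}_\delta)(r\omega)$ as a nonzero radial Bessel factor $B_{\ell_j}(\lambda r)$ times $Y^{j}_\delta(\omega)$, with $B_{\ell_j}(\lambda r)$ smooth at $r = 0$. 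Hence each $h^{j}_\delta$ must be a scalar multiple of $B_{\ell_j}(\lambda \cdot)$, so $f = \mathcal{P}_\lambda F$ for a suitable $K$-finite linear combination $F = \sum_{\delta, j} a^{j}_\delta Y^{j}_\delta$. Uniqueness of $F$ follows from Lemma 3.6.

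The main obstacle I expect is the Funk--Hecke step: verifying that the Poisson integral of a spherical harmonic of degree $\ell$ factors cleanly as a radial Bessel function (smooth at the origin) times the harmonic, with a nonzero leading constant. This is a classical computation for $K = O(n)$ and transfers to our setting once one observes that the basis $\{Y^{j}_\delta\}$ consists of genuine spherical harmonics, merely organized by $K$-type rather than by $O(n)$-type; everything else is bookkeeping on the isotypic decomposition.
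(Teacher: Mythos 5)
Your argument is correct in substance, but it follows a genuinely different route from the paper. The paper (following Helgason) fixes a suitable sphere $\Sigma$ about the origin, observes that the maps $F\mapsto f$ and $F\mapsto f\mid\Sigma$ are injective and commute with $K$ (injectivity of restriction of $K$-finite eigenfunctions to a suitable $\Sigma$ is imported from Lemma 1.5, p.~134 of [10]), and then concludes by a pure dimension count that the equivariant injection $\mathcal{E}_{\delta}(S^{n-1})\rightarrow\mathcal{E}_{\delta}(\Sigma)$ is onto for each $K$-type $\delta$; surjectivity on each isotypic piece together with injectivity of restriction yields the lemma. You instead argue by explicit separation of variables: radial Bessel-type ODEs, regularity at the origin cutting each solution space to dimension one, and the Funk--Hecke/Bochner identity identifying the regular solution with the Poisson integral of a spherical harmonic. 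Your route is more computational but self-contained --- it does not rely on the restriction-injectivity lemma of [10], which in the paper's generalized setting of an arbitrary closed $K$ transitive on spheres would in any case deserve re-verification --- while the paper's route is shorter at the price of that imported input. One point in your write-up needs rephrasing: the claim that every irreducible $K$-subrepresentation of $H_{\delta}$ lies in a single harmonic space $\mathcal{H}_{\ell}$ does not follow merely from $K$-equivariance of $\Delta_{S}$ (if the same $K$-type occurred in two degrees, diagonal copies would exist); what you actually need, and what is true, is that the isotypic projector $P_{\delta}$ commutes with $\Delta_{S}$, so $H_{\delta}=\bigoplus_{\ell}\bigl(H_{\delta}\cap\mathcal{H}_{\ell}\bigr)$ and hence $H_{\delta}$ admits a basis of genuine spherical harmonics. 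With that adjustment the rest of your argument, including the transfer of the classical Funk--Hecke computation to harmonics organized by $K$-type, goes through.
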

\begin{proof}
Let $\delta$ be an irreducible representation of $K$ and if $\Sigma$ is any sphere in $\mathbb{R}^{n}$ with center at 0 let $\mathcal{E}_{\delta}(\Sigma)$ denote the space of $K$-finite functions in $\mathcal{E}(\Sigma)$ of type $\delta$. We know from Lemma 1.5 p.134 in [10] that if $\Sigma$ is suitably chosen each function $f\mid \Sigma$ to$\Sigma$. With $F$ and $f$ as in (3.4) it follows that the maps

$F\rightarrow f\mid \Sigma,  \   F\rightarrow f  \  F\in L^{2}(S^{n-1})$

are one-to-one and commute with the action of $K$. For reasons of dimensionality, the first must therefore map $\mathcal{E}_{\delta}(S^{n-1})$ onto $\mathcal{E}_{\delta}(\Sigma)$. The lemma now follows.
\end{proof}

For $\lambda\neq 0$ let $\mathcal{H}_{\lambda}$ denote the space of functions $f$ as defined in (3.4); $\mathcal{H}_{\lambda}$ is a Hilbert space if the norm of $f$ is the $L^{2}$ norm of $F$ on $S^{n-1}$.

\begin{lemma}\label{twist}
Let $\lambda\neq 0$. Then the space $\mathcal{H}_{\lambda}$ is dense in $\mathcal{E}_{\lambda}(\mathbb{R}^{n})$.
\end{lemma}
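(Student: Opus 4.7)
The plan is a two-step reduction, both steps drawing on the representation theory of the compact group $K$ acting on a Fr\'echet space.

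\textbf{First step: reduction to density of the $K$-finite part.} The preceding lemma identifies every $K$-finite element of $\mathcal{E}_\lambda(\mathbb{R}^n)$ as an integral $\int_{S^{n-1}} e^{i\lambda(x,w)} F(w)\,dw$ with $F$ a $K$-finite function in $L^2(S^{n-1})$, hence as an element of $\mathcal{H}_\lambda$. Consequently the $K$-finite subspace $\mathcal{E}_\lambda(\mathbb{R}^n)_K$ is contained in $\mathcal{H}_\lambda$, and it suffices to prove that $\mathcal{E}_\lambda(\mathbb{R}^n)_K$ is dense in $\mathcal{E}_\lambda(\mathbb{R}^n)$ in the topology inherited from $\mathcal{E}(\mathbb{R}^n)$.

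\textbf{Second step: density of $K$-finite vectors via smoothing.} The compact group $K$ acts continuously on the Fr\'echet space $\mathcal{E}_\lambda(\mathbb{R}^n)$ by $(\pi(k)f)(x) = f(k^{-1}x)$. Given any $f \in \mathcal{E}_\lambda(\mathbb{R}^n)$, I would take an approximate identity $\{\phi_N\}$ of continuous functions on $K$ concentrated near the identity, so that
\[ \pi(\phi_N)f \;=\; \int_K \phi_N(k)\,\pi(k)f\,dk \]
converges to $f$ in the Fr\'echet topology. Next, by Peter--Weyl, each $\phi_N$ is uniformly approximable by a finite linear combination $\psi_N$ of matrix coefficients of irreducible $K$-representations. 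Each $\pi(\psi_N)f$ is then a $K$-finite element of $\mathcal{E}_\lambda(\mathbb{R}^n)$, since it lies in finitely many $K$-isotypic components, and $\pi(\psi_N)f \to f$ in the Fr\'echet topology. Combined with the first step, this yields the density of $\mathcal{H}_\lambda$ in $\mathcal{E}_\lambda(\mathbb{R}^n)$.

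\textbf{Main obstacle.} The delicate point is upgrading convergence from pointwise or $L^2$ control to the Fr\'echet topology of $\mathcal{E}(\mathbb{R}^n)$, namely uniform convergence on compact sets of every partial derivative $\partial^\alpha$. This should work because each seminorm $p_{C,\alpha}(f) = \sup_{x \in C}|\partial^\alpha f(x)|$ is controlled under the $K$-action: since $K \subset O(n)$ acts by rotations, the orbit $K \cdot C$ remains compact and $p_{C,\alpha}(\pi(k)f) \leq p_{K\cdot C,\alpha}(f)$ uniformly in $k \in K$. This bound, together with continuity of the orbit map $k \mapsto \pi(k)f$ and compactness of $K$, will allow us to pass each seminorm inside the integrals defining $\pi(\phi_N)f$ and $\pi(\psi_N)f$ and to obtain the required convergence in every seminorm of $\mathcal{E}(\mathbb{R}^n)$.
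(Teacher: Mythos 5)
Your proof is correct, and it shares the paper's skeleton: both arguments reduce the lemma to two facts, namely that the $K$-finite eigenfunctions lie in $\mathcal{H}_{\lambda}$ (the preceding lemma on $K$-finite solutions) and that the $K$-finite eigenfunctions are dense in $\mathcal{E}_{\lambda}(\mathbb{R}^{n})$. Where you differ is in how the second fact is obtained. The paper simply cites the expansion of an arbitrary eigenfunction into a convergent series of $K$-finite eigenfunctions, $f=\sum_{\delta\in\hat{K}}\alpha_{\delta}*f$, referring to Sect.~5 of Helgason's eigenspace paper [6] for convergence in the topology of $\mathcal{E}$; density is then immediate because the partial sums are $K$-finite. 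You instead give a self-contained smoothing argument: an approximate identity $\phi_{N}$ on $K$ gives $\pi(\phi_{N})f\to f$ in the Fr\'echet topology, and replacing $\phi_{N}$ by a uniformly close trigonometric polynomial $\psi_{N}$ (Peter--Weyl) produces $K$-finite approximants $\pi(\psi_{N})f$. This needs strictly weaker input than the convergent series (you never require the full Fourier series over $\hat{K}$ to converge), and it works for any continuous representation of a compact group on a complete locally convex space, so your route is more elementary and more general, at the cost of redoing a standard argument that the paper outsources to [6]. Two small points to tighten: the seminorm bound should read $p_{C,\alpha}(\pi(k)f)\le C_{\alpha}\max_{|\beta|\le|\alpha|}p_{K\cdot C,\beta}(f)$ rather than $p_{C,\alpha}(\pi(k)f)\le p_{K\cdot C,\alpha}(f)$, since differentiating $f(k^{-1}x)$ brings in the (bounded) entries of $k^{-1}$ through the chain rule; and you should note explicitly that $\pi(\phi_{N})f$ and $\pi(\psi_{N})f$ stay in $\mathcal{E}_{\lambda}(\mathbb{R}^{n})$ because $L$ commutes with the $K$-action, so the eigenspace is a closed invariant subspace. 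Neither point affects the validity of the argument.
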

\begin{proof}
Each eigenfunction of $L$ can be expanded in a convergent series of $K$-finite eigenfunctions (cf. Sect. 5 [6])
so the lemma follows from Lemma 3.8.
\end{proof}
We can now prove Theorem 3.3. We first prove that $G$ acts irreducibly on $\mathcal{H}_{\lambda}$. Let $V\neq 0$ be a closed invariant subspace of $\mathcal{H}_{\lambda}$. Then there exists an $h\in V$ such that $h(0)=1$. We write
\begin{equation}\label{l-invariant elements}
h(x)=\int_{S^{n-1}}e^{i\lambda(x,w)}H(w)dw
\end{equation}
and the average $h^{\natural}(x)=\int_{K}h(k.x)dk$ is then
\begin{equation}\label{l-invariant elements}
h^{\natural}(x)=\varphi_{\lambda}(x)=\int_{S^{n-1}}e^{i\lambda(x,w)}dw.
\end{equation}
If $f$ in (3.4) lies in the annihilator $V^{0}$ of $V$ the functions $F$ and $H$ are orthogonal on $S^{n-1}$. Since $V^{0}$ is $K$-invariant this remains true for $H$ replaced by its integral over $K$, in other words $\varphi_{\lambda}$ belongs to the double annihilator $(V^{0})^{0}=V$. Now, since $V$ is invariant under translations it follows that for each $t\in \mathbb{R}$ the function
\begin{equation}\label{l-invariant elements}
x\rightarrow \int_{S^{n-1}}e^{i\lambda(x,w)}e^{i\lambda(t,w)}dw
\end{equation}
belongs to $V$. But then Lemma 3.5 shows that the annihilator of $V$ in $\mathcal{H}_{\lambda}$ is ${0}$, whence the irreducibility of $G$ on $\mathcal{H}_{\lambda}$.

Passing now to $\mathcal{E}_{\lambda}$ let $V\subset \mathcal{E}_{\lambda}$ be a closed invariant subspace. Then $V\cap \mathcal{H}_{\lambda}$ is an invariant subspace of  $\mathcal{H}_{\lambda}$; Schwartz' inequality shows easily that it is closed. Thus, by the above, $V\subset \mathcal{E}_{\lambda}$ is $\{0\}$ or $\mathcal{H}_{\lambda}$. In the second case $V=\mathcal{E}_{\lambda}$ by Lemma 3.10. In the first case consider for each $f\in V$ the convergent expansion
\begin{equation}\label{l-invariant elements}
f=\sum_{\delta\in \hat{K}}\alpha_{\delta}*f
\end{equation}
where $\alpha_{\delta}=d(\delta)\chi_{\delta}^{*}$ and
\begin{equation}\label{l-invariant elements}
(\alpha_{\delta}*f)(x)=\int_{K}\alpha_{\delta}(k)f(k^{-1}.x)dk
\end{equation}
$\chi_{\delta}$ being the character of $\delta$.Then $\alpha_{\delta}*f\in \mathcal{H}_{\lambda}$ by Lemma 3.8. Let $V^{0}\subset \mathcal{E}^{'}(\mathbb{R}^{n})$ be the annihilator of $V$. Then $V^{0}$ is $G$-invariant  and if $T\in V^{0}$,
\begin{equation}\label{l-invariant elements}
\int_{\mathbb{R}^{n}}(\alpha_{\delta}*f)(x)dT(x)=\int_{K}\alpha_{\delta}(k)\int_{\mathbb{R}^{n}}f(x)dT(k.x)dk
\end{equation}
so $\alpha_{\delta}*f$ belongs to the double annihilator $(V^{0})^{0}=V$. Thus $\alpha_{\delta}*f\in V\cap \mathcal{H}_{\lambda}=\{0\}$ so, by (3.6),$f=0$. Thus $V=\{0\}$ so the proof is finished.

%%%%%%%%%%%%%%%%% Section 4 %%%%%%%%%%%%%%%%%%%%%%%%%%%%%%%%%%%%%%%%%%%%
\section{A estimation for some spherical functions and the groups K}
%%%%%%%%%%%%%%%%%%%%%%%%%%%%%%%%%%%%%%%%%%%%%%%%%%%%%%%%%%%%%
From [7], we know if $K$ is transitive on the spheres about 0 in $\mathbb{R}^{n}$, then the spherical function on $K\rtimes \mathbb{R}^{n}\simeq \mathbb{R}^{n}$ is of the form:
$\varphi_{s}(r)=\varphi(r,s)=\int_{S^{n-1}}e^{s(\xi,x)}d\sigma(\xi)$
=$\frac{\Gamma(\frac{n}{2})}{\sqrt{\pi}\Gamma\frac{n-1}{2}}\int_{0}^{\pi}e^{sr\cos \theta}\sin^{n-2}\theta d\theta$.

Where $s$ is a complex number, and $r=\left \|x  \right \|=\sqrt{x_{1}^{2}+\ldots +x_{n}^{2}}$, $S^{n-1}$ is the unit sphere in $\mathbb{R}^{n}$ and
$\sigma$ the normalized surface measure on $S^{n-1}$.

If Re$s$=0 then it follows from above equation that $\left |\varphi_{s}(x)  \right |\leq 1$ for all $x\in G$.

Clearly,$\varphi_{s}=\varphi_{-s}$. We just need to consider the case Re$s\geq 0$.

\begin{theorem}\label{equal}
If Re$s>0$, we have
$\varphi(r,s)\sim \frac{\Gamma(\frac{n}{2})2^{\frac{n-3}{2}}}{\sqrt{\pi}}\frac{e^{sr}}{(sr)^{\frac{n-1}{2}}}$,

when $r\rightarrow \infty$.
\end{theorem}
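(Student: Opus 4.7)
The plan is to recognize the integral as a standard representation of the modified Bessel function $I_\nu$ and then invoke its well-known asymptotic expansion. Specifically, the Poisson integral representation states that for $\operatorname{Re}\nu>-1/2$ and arbitrary complex $z$,
\[
\int_0^\pi e^{z\cos\theta}\sin^{2\nu}\theta\,d\theta \;=\; \frac{\sqrt{\pi}\,\Gamma(\nu+\tfrac12)}{(z/2)^{\nu}}\,I_\nu(z).
\]
Setting $\nu=(n-2)/2$ and $z=sr$ (so that $2\nu=n-2$ and $\Gamma(\nu+\tfrac12)=\Gamma((n-1)/2)$), I would substitute this into the definition of $\varphi(r,s)$. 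After the factors $\sqrt{\pi}$ and $\Gamma((n-1)/2)$ in the normalizing constant cancel, one obtains the closed form
\[
\varphi(r,s) \;=\; \frac{\Gamma(n/2)\,2^{(n-2)/2}}{(sr)^{(n-2)/2}}\,I_{(n-2)/2}(sr).
\]

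Since $r>0$ is real and $\operatorname{Re} s>0$, the argument $sr$ satisfies $|\arg(sr)|<\pi/2$, so the classical asymptotic
\[
I_\nu(z) \;\sim\; \frac{e^z}{\sqrt{2\pi z}} \qquad (|z|\to\infty,\ |\arg z|<\pi/2)
\]
applies with $z=sr\to\infty$. Substituting, and using $2^{(n-2)/2}/\sqrt{2\pi}=2^{(n-3)/2}/\sqrt{\pi}$, yields
\[
\varphi(r,s)\;\sim\;\frac{\Gamma(n/2)\,2^{(n-2)/2}}{(sr)^{(n-2)/2}}\cdot\frac{e^{sr}}{\sqrt{2\pi\, sr}}\;=\;\frac{\Gamma(n/2)\,2^{(n-3)/2}}{\sqrt{\pi}}\cdot\frac{e^{sr}}{(sr)^{(n-1)/2}},
\]
which is exactly the claim.

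As a more self-contained alternative, one can apply Laplace's method directly to the integral. Since $\operatorname{Re}(s\cos\theta)$ attains its maximum on $[0,\pi]$ uniquely at $\theta=0$, one splits the range into $[0,\delta]$ and $[\delta,\pi]$; the latter contributes at most $O(e^{(\operatorname{Re}s)r\cos\delta})$, which is exponentially smaller than $e^{(\operatorname{Re}s)r}$. On $[0,\delta]$, one expands $\cos\theta=1-\theta^2/2+O(\theta^4)$ and $\sin^{n-2}\theta=\theta^{n-2}(1+O(\theta^2))$, then changes variables $t=sr\theta^2/2$ to reduce the leading term to a Gamma integral giving $e^{sr}\,\Gamma((n-1)/2)\,2^{(n-3)/2}/(sr)^{(n-1)/2}$; multiplying by the prefactor recovers the stated asymptotic.

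The main subtlety in either approach is the complex character of $s$. In the Bessel-function route, the only check is that $sr$ lies in the sector $|\arg z|<\pi/2$ where the asymptotic expansion is valid, which is immediate from $\operatorname{Re} s>0$. In the direct Laplace approach one must justify that the substitution $t=sr\theta^2/2$ with real $\theta$ but complex $t$ still yields a convergent integral equal (to leading order) to the real Gamma integral; this is standard but requires either a contour deformation in the $t$-plane or a direct control of error terms via $|e^{sr\cos\theta}|=e^{(\operatorname{Re}s)r\cos\theta}$. I expect the Bessel function route to be the cleaner execution, and that is the one I would write up.
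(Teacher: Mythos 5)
Your proposal is correct, and your primary (Bessel-function) route is genuinely different from the paper's argument. The paper works directly with the integral $\frac{\Gamma(n/2)}{\sqrt{\pi}\,\Gamma((n-1)/2)}\int_{-1}^{1}e^{srt}(1-t^{2})^{(n-3)/2}\,dt$, substitutes $t=1-u/r$ to pull out the factor $e^{sr}r^{-(n-1)/2}$, and then evaluates $\lim_{r\to\infty}\int_{0}^{2r}e^{-su}u^{(n-3)/2}(2-u/r)^{(n-3)/2}\,du=\Gamma(\tfrac{n-1}{2})2^{(n-3)/2}s^{-(n-1)/2}$ by dominated convergence --- in effect a self-contained Watson's-lemma/Laplace computation, close in spirit to the \emph{alternative} you sketch at the end (your direct Laplace method on the $\theta$-integral). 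Your main route instead identifies $\varphi(r,s)=\Gamma(n/2)\,2^{(n-2)/2}(sr)^{-(n-2)/2}I_{(n-2)/2}(sr)$ via the Poisson representation (valid since $\nu=(n-2)/2>-1/2$ for $n\ge 2$) and quotes the classical asymptotic $I_{\nu}(z)\sim e^{z}/\sqrt{2\pi z}$ in the sector $|\arg z|<\pi/2$, which applies because $\operatorname{Re}s>0$ forces $\arg(sr)=\arg s$ to lie in that sector; the constants match after $2^{(n-2)/2}/\sqrt{2\pi}=2^{(n-3)/2}/\sqrt{\pi}$. What each buys: your route is shorter and exposes the exact closed form of $\varphi$ in terms of $I_{\nu}$ (which also gives higher-order terms for free from the full Bessel expansion), at the cost of citing a standard asymptotic rather than proving it; the paper's route is elementary and self-contained, though its dominated-convergence step deserves a word about the dominating function (e.g.\ for $n=2$ the factor $(2-u/r)^{-1/2}$ is unbounded near $u=2r$ and needs the integrable singularity handled explicitly) --- a point your complex-$s$ remark about controlling $|e^{sr\cos\theta}|=e^{(\operatorname{Re}s)r\cos\theta}$ shows you are aware of in the analogous place.
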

\begin{proof}
From above equation, we know that, by an elementary substitution,

$\varphi_{s}(r)=\varphi(r,s)$
=$\frac{\Gamma(\frac{n}{2})}{\sqrt{\pi}\Gamma(\frac{n-1}{2})}\int_{-1}^{1}e^{srt}(1-t^{2})^{\frac{n-3}{2}}dt$

and, setting $t=1-\frac{u}{r}$, we obtain

$\varphi(r,s)=\frac{\Gamma(\frac{n}{2})}{\sqrt{\pi}(\frac{n-1}{2})}\frac{e^{sr}}{r^{\frac{n-1}{2}}}\int_{0}^{2r}e^{-su}u^{\frac{n-3}{2}}(2-\frac{u}{r})^{\frac{n-3}{2}}du$

For Re$s>0$, we get, using Lebesgue's dominated convergence theorem,

$\lim_{r\rightarrow \infty}\int_{0}^{2r}e^{-su}u^{\frac{n-3}{2}}(2-\frac{u}{r})^{\frac{n-3}{2}}du=\frac{\Gamma(\frac{n-1}{2})2^{\frac{n-3}{2}}}{s^{\frac{n-1}{2}}}$
\end{proof}

Finally, I will give all the possible $K$, which is transitive on the spheres about 0 on $\mathbb{R}^{n}$, $n>1$. [11]

When $K$ is transitive on the spheres about 0 in $\mathbb{R}^{n}$,$n>1$, its identity compoment $K^{0}$ is also transitive, and $K=K^{0}F$ where $F$ is a finite subgroup of the normalizer $N_{O(n)}(K^{0})$. The possibilities for $K^{0}$ are as follows:

(1)$n>1$ and $K^{0}=SO(n)$,

(2)$n=2m$ and (i)$K^{0}=SU(m)$ or (ii)$U(m)$,

(3)$n=4m$ and (i)$K^{0}=Sp(m)$ or (ii)$Sp(m).U(1)$ or $Sp(m).Sp(1)$,

(4)$n=7$ and $K^{0}$ is the exceptional group $G_{2}$,

(5)$n=8$ and $K^{0}=Spin(7)$, and

(6)$n=16$ and $K^{0}=Spin(9)$.

In case (1), $N_{O(n)}(K^{0})=O(n)$, so the relevant choices for $F$ are $\{I\}$ and $\{I,-I\}$, so $K$ is either $SO(n)$ or $O(n)$.

In case (2)(i),$N_{O(n)}(K^{0})=U(m)\bigcup  \alpha U(m)$ where $\alpha$ is complex conjugation of $\mathbb{C}^{m}$ over $\mathbb{R}^{m}$. The relevant
choices for $F$ are the finite subgroups of $U(1)\bigcup \alpha U(1)$ where $U(1)$ consists of the unitary scalar matrices $e^{ix}I$, $x$ real. Those
are the cyclic groups $\mathbb{Z}_{l}=\{e^{2\pi ik/l}I\}$ of order $l\geq 1$ and the dihedral groups $\mathbb{D}_{l}=\mathbb{Z}_{l}\bigcup \alpha\mathbb{Z}_{l}$, so $K$ is a group $SU(m)\mathbb{Z}_{l}$ or $SU(m)\mathbb{D}_{l}$. In the case (2)(ii) the relevant possibilities for $F$ are $\{I\}$ and $\{\alpha,I\}$, so $K$ is either $U(m)$ or $U(m)\bigcup \alpha U(m)$.

In case (3){i},(3)(iii),(4),(5),(6), $K^{0}$ has no outer automorphism, so we may take $F$ in the centralizer $Z_{O(n)}(K^{0})$. Thus in the case (3)(i), $F$ can be any subgroup of $Sp(1)$, in the other words, a cyclic group $\mathbb{Z}_{l}$ of order $l$, a binary dihedral group $\mathbb{D}_{l}^{*}$ of order $4l$, a binary tetrahedral group $\mathbb{T}^{*}$ of order 24, a binary octahedral group $\mathbb{O}^{*}$ of order 48, or a binary icosahedral group $\mathbb{I}^{*}$ of order 60. Thus $K$ is a group $Sp(m)\mathbb{Z}_{l}$,$Sp(m)\mathbb{D}_{l}^{*}$,$Sp(m)\mathbb{T}^{*}$, $Sp(m)\mathbb{O}^{*}$ or $Sp(m)\mathbb{I}^{*}$. In case (3)(ii) the relevant possibilities for $F$ are $\{I\}$ and $\{\beta,I\}$, where the $U(1)$ factor of $K^{0}$ consists of all quaternion scalar multiplications by complex numbers $e^{ix}$, $x$ is real, as in the case (2), and $\beta$ is quaternion scalar multiplication by $j$.Thus $K$ is either $Sp(m)U(1)$ or $(Sp(m)U(1))\bigcup (Sp(m)U(1))\beta$. In case (3)(iii), $K^{0}$ is its own $O(n)$-centralizer so $F=\{I\}$ and $K=Sp(m)Sp(1)$.

In case (4),(5),(6),$K^{0}$ is absolutely irreducible on $\mathbb{R}^{n}$, so relevant $F$ would have to consist of real scalars. As $G_{2}$ does not contain $-I$ we see that the relevant $F$ for case (4) are $\{I\}$ and $\{I,-I\}$, resulting in $K=G_{2}$ and $K=G_{2}\bigcup (-I)G_{2}$. Both $Spin(7)$ and $Spin(9)$ do contain $-I$, so $F$ is trivial in case (5),(6).That gives $K=Spin(7)$ in case (5) and $K=Spin(9)$ for case (6).

\end{document}